\documentclass[12pt]{article}

 \usepackage{amsmath,amssymb,amscd,amsthm,esint}

\usepackage{graphics,amsmath,amssymb,amsthm,mathrsfs}

\usepackage{graphics,amsmath,amssymb,amsthm,mathrsfs,amsfonts}
\usepackage{sidecap}
\usepackage{float}
\usepackage{extarrows}
\usepackage{booktabs}
\usepackage{verbatim}
\usepackage{hyperref}
\usepackage[usenames,dvipsnames]{xcolor}

\newcommand{\rn}[1]{\mathbb{R}^{#1}}

\newcommand{\diverg}[1]{\mbox{div}\,#1\,}

\setlength{\paperwidth}{8.5in} \setlength{\paperheight}{11.0in}
\setlength{\textwidth}{6.5in} \setlength{\textheight}{9.0in}
\setlength{\oddsidemargin}{0in} \setlength{\evensidemargin}{0in}
\setlength{\topmargin}{0in} \setlength{\headsep}{0.0in}
\setlength{\headheight}{0.0in} \setlength{\marginparwidth}{0in}
\setlength{\marginparsep}{0in}

\newtheorem{thm}{Theorem}[section]
\newtheorem{lemma}[thm]{Lemma}
\newtheorem{cor}[thm]{Corollary}

\theoremstyle{definition}
\newtheorem{remark}[thm]{Remark}

\def\XXint#1#2#3{{\setbox0=\hbox{$#1{#2#3}{\int}$}
         \vcenter{\hbox{$#2#3$}}\kern-.5\wd0}}

\def\R{\mathbb{R}}

\def\e{\varepsilon}

\numberwithin{equation}{section}

\begin{document}

\bibliographystyle{amsplain}

\title{Neumann Problems for the Stokes Equations\\  in Convex Domains}

\author{Jun Geng  \thanks{Supported in part by NNSF grant 12371096.}
\qquad
Zhongwei  Shen \thanks{Supported in part by NSF grant DMS-2153585.}}
\date{}

\maketitle

\begin{abstract}

This paper studies the Neumann boundary value problems for the Stokes equations in a convex domain in $\mathbb{R}^d$.
We obtain  nontangential-maximal-function estimates in $L^p$ and $W^{1, p}$ estimates
 for $p$ in certain ranges depending on $d$.
These ranges are larger than the known ranges for Lipschitz domains.
The proof relies on a $W^{2, 2}$ estimate for the Stokes equations in convex domains.


\medskip

\noindent{\it Keywords}: Stokes Equations; Convex Domain.

\medskip

\noindent {\it MSC2020}:  35J57, 35Q35.

\end{abstract}


\section{Introduction}

Let $\Omega$ be a bounded convex domain in $\rn{d}$, $d\geq 2$,
and $n$  the outward unit normal to
$\partial\Omega$. In this paper we are interested in the $L^p$ Neumann boundary value problem for
the Stokes equations,
\begin{equation}\label{stokesSystem}
\left\{
\aligned
        -\Delta u +  \nabla \phi  &=0  &  \quad & \mbox{ in } \Omega, \\
        \text{\rm div} (u) &  = 0  & \quad & \mbox{ in } \Omega, \\
        \frac{\partial u}{\partial \nu} & =g  & \quad &  \mbox{ on }
        \partial\Omega, \\
        (\nabla u)^*, (\phi)^*  & \in L^p(\partial\Omega),
    \endaligned
    \right.
    \end{equation}
where  $g\in L^p(\partial\Omega; \mathbb{R}^d)$,
\begin{equation}\label{co}
\frac{\partial u}{\partial \nu}=\frac{\partial u}{\partial n}-\phi n,
\end{equation}
and $(\nabla u)^*, (\phi)^*$ denote  the nontangential maximal functions
of $\nabla u, \phi$, respectively.
 The  Neumann condition $\frac{\partial  u}{\partial \nu} =g$ in \eqref{stokesSystem}  is taken in the sense of
nontangential convergence;
i.e.
$$
\lim_{\substack{y \rightarrow x \\ y\in \Gamma(x)}} \Big\{ \nabla u_i (y) \cdot n(x)-  \phi (y) n_i(x)  \Big\}
= g_i(x)\quad  \mbox{ for a.e. } x\in \partial\Omega
$$
and  $1\le i\le d$,
 where
$\Gamma(x) = \left\{  y \in \Omega:
|y-x|<C_0 \, \mbox{dist}(y,\partial\Omega)\right\}$.
Let $L_0^p(\partial \Omega; \R^d)$  denote the subspace of $L^p(\partial \Omega; \R^d)$ of functions $g$ with $\int_{\partial \Omega} g=0$.
We say that the $L^p$ Neumann
problem (\ref{stokesSystem})  is uniquely solvable if
given any $g\in L^p_0(\partial\Omega; \R^d)$, there exists a
unique  $(u, \phi) $ (up to constants for $u$), satisfying (\ref{stokesSystem}), and  the
solution $(u, \phi)$ satisfies
\begin{equation}\label{1.2}
\|(\nabla u)^*\|_{L^p(\partial\Omega)} + \| (\phi)^* \|_{L^p(\partial\Omega)}
\leq
C\|g\|_{L^p(\partial\Omega)},
\end{equation}
where $C$ depends only on $d, p$ and $\Omega$.

The following is one of  the main results of the paper.

\begin{thm}\label{NeumannProblem}
    Let $\Omega$ be a bounded convex domain in $\rn{d}$, $d\geq
    2$.  Then  the $L^p$ Neumann problem  \eqref{stokesSystem} is uniquely solvable  if
    \begin{equation}\label{p}
    \left\{
    \aligned
         1< &p<\infty  &  \qquad & \mbox{ for } d=2, \\
         1< & p<4 +\e  & \quad & \mbox{ for } d=3, \\
         \frac{2(d-1)}{d+1}-\e< & p<\frac{2(d-1)}{d-2}  +\e  & \quad & \mbox{ for } d\geq 4,
    \endaligned
    \right.
    \end{equation}
     where $\varepsilon=\e(\Omega)>0$.
\end{thm}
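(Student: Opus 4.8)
\emph{Proof plan.} The plan is to derive Theorem~\ref{NeumannProblem} from three ingredients: the $L^2$ solvability of \eqref{stokesSystem}, the $W^{2,2}$ estimate for the Stokes equations in convex domains, and a real-variable self-improvement argument, with the subrange $p<2$ recovered by duality. I would begin with the base case $p=2$. Since a bounded convex domain is Lipschitz, the $L^2$ Neumann problem for the Stokes system is uniquely solvable: the Rellich identity, applied with a vector field $h$ satisfying $h\cdot n\ge c>0$ on $\partial\Omega$, yields the two-sided bound $\|\nabla_{\tan}u\|_{L^2(\partial\Omega)}+\|\phi\|_{L^2(\partial\Omega)}\approx\|\partial u/\partial\nu\|_{L^2(\partial\Omega)}$; combined with the square-function/nontangential-maximal-function estimates for $\nabla u$ and $\phi$ (using that $u$ is biharmonic and $\phi$ harmonic) this gives \eqref{1.2} for $p=2$, while existence comes from boundary layer potentials or a variational formulation on a quotient space.

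The main work is the range $p>2$. Starting from the $W^{2,2}$ estimate in convex domains, I would localize it on surface balls $\Delta_{2r}=B(x_0,2r)\cap\partial\Omega$, $x_0\in\partial\Omega$. For a solution $(u,\phi)$ of the Stokes equations in $\Omega$ with $\partial u/\partial\nu=0$ on $\Delta_{2r}$, inserting a cutoff and subtracting a suitable affine function for $u$ and a constant for $\phi$ (so the lower-order and pressure terms are absorbed via Poincaré, using that $\Omega\cap B(x_0,2r)$ is again convex) gives the Caccioppoli-type inequality
\[
\|\nabla^2u\|_{L^2(D_r)}+\|\nabla\phi\|_{L^2(D_r)}\ \lesssim\ r^{-1}\bigl(\|\nabla u\|_{L^2(D_{2r})}+\|\phi\|_{L^2(D_{2r})}\bigr),\qquad D_\rho=\Omega\cap B(x_0,\rho).
\]
Sobolev embedding $W^{1,2}\hookrightarrow L^{2d/(d-2)}$ (any exponent if $d=2$) upgrades this to a reverse-H\"older bound for $\nabla u$ and $\phi$, and a trace/maximal-function estimate, together with interior estimates for the part of the nontangential cone away from $\partial\Omega$, converts it into
\[
\Bigl(\dashint_{\Delta_r}|(\nabla u)^*|^{q_1}+|(\phi)^*|^{q_1}\Bigr)^{1/q_1}\ \lesssim\ \Bigl(\dashint_{\Delta_{2r}}|(\nabla u)^*|^2+|(\phi)^*|^2\Bigr)^{1/2},\qquad q_1=\frac{2(d-1)}{d-2}
\]
($q_1$ arbitrary finite if $d=2$), valid for solutions with zero conormal data on $\Delta_{2r}$.

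Feeding the $p=2$ bound and this boundary reverse-H\"older inequality into the real-variable extrapolation theorem of Shen gives \eqref{1.2} for all $2\le p<q_1$, hence for $2\le p<\frac{2(d-1)}{d-2}+\e$ by the openness built into the method; existence for these $p$ follows from the density of $H^{1/2}$-data (for which the $W^{2,2}$ solution exists) together with the a priori estimate, and uniqueness is immediate since $L^p(\partial\Omega)\subset L^2(\partial\Omega)$ for $p\ge2$. For $\frac{2(d-1)}{d+1}-\e<p<2$ the result is already available for Lipschitz domains and is obtained by duality with the $L^{p'}$ Dirichlet problem for the Stokes system, which in a Lipschitz domain is solvable for $2-\e<p'<\frac{2(d-1)}{d-3}+\e$ (Fabes--Kenig--Verchota, Mitrea--Wright) — precisely the dual range, with no upper restriction on $p'$ when $d\le3$. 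This matches \eqref{p}: for $d=2$ one gets $1<p<\infty$, for $d=3$ one gets $1<p<4+\e$, and for $d\ge4$ one gets $\frac{2(d-1)}{d+1}-\e<p<\frac{2(d-1)}{d-2}+\e$.

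I expect the genuine obstacle to be the middle step: localizing the convex-domain $W^{2,2}$ estimate without degrading its geometric constant, and then turning the resulting interior higher-integrability of $\nabla u$ and $\phi$ into the boundary reverse-H\"older inequality for the nontangential maximal functions. The delicate points there are the pressure $\phi$, which is defined only up to a constant and is coupled to the constraint $\mathrm{div}\,u=0$, and the lower-order terms generated by the cutoff, which must be reabsorbed by a careful choice of the subtracted affine/constant functions and Poincaré--Sobolev inequalities on $\Omega\cap B(x_0,r)$.
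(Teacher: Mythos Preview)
Your overall architecture --- $L^2$ theory plus a boundary weak reverse H\"older inequality for $(\nabla u)^*+(\phi)^*$ fed into Shen's real-variable extrapolation, with the range $p<2$ taken from the known Lipschitz result --- matches the paper, and the localized $W^{2,2}$ estimate together with its Sobolev consequence (a \emph{solid} reverse H\"older inequality for $|\nabla u|+|\phi|$ with exponent $\tfrac{2d}{d-2}$) are the right ingredients. But the step you yourself flag as the ``genuine obstacle'' is a real gap, and you have misdiagnosed what makes it hard. The difficulty is not the pressure constant or the cutoff lower-order terms in the Caccioppoli inequality; it is the passage from the solid reverse H\"older bound on $D_r=\Omega\cap B(x_0,r)$ to the \emph{boundary} reverse H\"older bound for the nontangential maximal functions on $\Delta_r$. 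Your proposed bridge, ``a trace/maximal-function estimate'', does not exist in the form you need: there is no trace inequality that converts $L^{2d/(d-2)}(D_r)$ control of $|\nabla u|+|\phi|$ directly into $L^{2(d-1)/(d-2)}(\Delta_r)$ control of $(\nabla u)^*+(\phi)^*$.

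The paper's actual mechanism is the square-function inequality $\|(\nabla u)^*+(\phi)^*\|_{L^p}\lesssim\|S(\nabla u)+S(\phi)\|_{L^p}+\text{(interior value)}$, applied on the Lipschitz subdomain $D_{2r}$, followed by the weighted bound
\[
\int_{\partial D}|S(w)|^p\,d\sigma\ \le\ C_\gamma\, r^\gamma\int_D|\nabla w(x)|^p\,[\delta(x)]^{\,p-1-\gamma}\,dx.
\]
One then splits $(|\nabla^2u|+|\nabla\phi|)^p=(|\nabla^2u|+|\nabla\phi|)^{p-2}\cdot(|\nabla^2u|+|\nabla\phi|)^2$: the integral of the second factor is controlled by the localized $W^{2,2}$ estimate, while the first factor is bounded pointwise by combining interior estimates with the solid reverse H\"older inequality, which produces a power $\delta(x)^{-(1+d/\bar p)(p-2)}$ with $\bar p=\tfrac{2d}{d-2}+\e$. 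The requirement that the net exponent of $\delta(x)$ be nonnegative is exactly $p<2+\bar p/d=\tfrac{2(d-1)}{d-2}+\e$. So both convexity-based estimates enter, for different purposes, and the balancing of $\delta$-powers is the computation that yields the range in \eqref{p}. (A minor correction: the subrange $p<2$ is not obtained by duality with the $L^{p'}$ Dirichlet problem --- the natural dual of the Neumann problem is the regularity problem --- and the paper simply invokes the Lipschitz result from the literature.)
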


\begin{remark}
Note that a convex domain is a Lipschitz domain.
For a bounded Lipschitz domain in $\mathbb{R}^d$, the $L^p$ Neumann problem for the Stokes equations is uniquely solvable for
\begin{equation}\label{Lp-p-Lipschitz}
\left\{
\aligned
1< & p< 2+\e  & \quad & \text{ if } d=2 \text{ or } 3,\\
\frac{2(d-1)}{d+1}-\e< & p<2+\e & \quad & \text{ if } d\ge 4.\\
\endaligned
\right.
\end{equation}
The case $2-\e< p< 2+\e$ was proved by the method of layer potentials in \cite{FKV-1988},
while the remaining case for $p<2$ was obtained in \cite{MW-2012}.
The upper bound $2+\e$ in \eqref{Lp-p-Lipschitz} is sharp for a general Lipschitz domain.
Theorem \ref{NeumannProblem}  improves the upper bounds of $p$ in the case of  convex domains.
It is not known whether  the lower and upper bounds of $p$ in \eqref{p} for $d\ge 4$
as well as the upper bound for $d=3$  are sharp for convex domains.
\end{remark}

In this paper we also  study the $W^{1,p}$ estimate for the inhomogeneous Stokes equations in a convex domain
with the Neumann boundary condition. Let $1< p< \infty$.
For any $f\in L^p(\Omega; \R^{d\times d} )$, $F\in L^p(\Omega; \R^d)$ and $g\in B^{-1/p,p}(\partial\Omega; \R^d)$,
we seek a  solution $(u,\phi)\in W^{1,p}(\Omega; \R^d)\times L^p(\Omega)$ such that
\begin{equation}\label{inhomogeneousStokes}
\left\{
\aligned
       - \Delta u + \nabla \phi  & =\text{div}(f)+F & \quad &  \mbox{ in } \Omega, \\
        \text{\rm div}(u)   &= 0   & \quad & \mbox{ in }\Omega, \\
        \frac{\partial u}{\partial \nu} & =-n\cdot f +g  & \quad &  \mbox{ on }
        \partial\Omega,
    \endaligned
    \right.
    \end{equation}
holds in the weak sense; i.e.,
\begin{equation}\label{weak-NP}
\int_\Omega \nabla u\cdot \nabla \varphi
- \int_\Omega \phi\,  \text{\rm div}(\varphi)
=-\int_\Omega  f\cdot \nabla \varphi
+\int_\Omega F \cdot \varphi
+\langle g, \varphi \rangle_{B^{-1/p, p}(\partial\Omega)\times B^{1/p, p^\prime}(\partial\Omega)}
\end{equation}
for any $\varphi \in C^\infty(\R^d; \R^d)$, and $\int_\Omega u \cdot \nabla \psi=0$
for any $\psi \in C_0^\infty(\Omega)$.
Here, $B^{1/p, p^\prime}(\partial\Omega)$ denotes the Besov space of order $1/p$ on $\partial\Omega$ and
$B^{-1/p, p}(\partial\Omega)$ the dual of $B^{1/p, p^\prime}(\partial\Omega)$.
By taking $\varphi=e_j$ in \eqref{weak-NP},
 we obtain a compatibility condition,
\begin{equation}\label{comp}
\int_\Omega F \cdot e_j
=- \langle g, e_j \rangle_{B^{-1/p, p}(\partial\Omega) \times B^{1/p, p^\prime}(\partial\Omega)}
\end{equation}
for $1\le j \le d$.
In the case $p=2$ and $\Omega$ is Lipschitz, it is  known that  the solution $(u, \phi)$ exists and is unique (up to constants for $u$). Moreover, we have the energy estimate,
\begin{equation}\label{energy-1}
    \|\nabla u\|_{L^2(\Omega)}+\|\phi\|_{L^2(\Omega)} \leq C\left\{ \|f\|_{L^2(\Omega)}+\|F\|_{L^2(\Omega)}+\|g\|_{B^{-1/2,2}(\partial\Omega)} \right\},
    \end{equation}
    where $C$ depends on $\Omega$.

\begin{thm}\label{W1p}
    Let $\Omega$ be a bounded convex domain in $\rn{d}$, $d\geq
    2$. Suppose that
      \begin{equation}\label{w1p-p-convex}
    \left\{
    \aligned
         1< & p<\infty   & \quad & \mbox{ if } d=2, \\
          \big| \frac{1}{p}-\frac12 \big| & < \frac{1}{d} + \e   & \quad & \mbox{ if } d\geq 3,
    \endaligned
    \right.
    \end{equation}
    where $\varepsilon=\e(\Omega)>0$.
    Then for any $F\in L^p(\Omega; \R^d), f\in L^p(\Omega; \R^{d\times d} )$,
     and $g\in B^{-1/p,p}(\partial\Omega; \R^d)$ satisfying the compatibility condition \eqref{comp},
     there exists a unique $(u,\phi)\in W^{1,p}(\Omega; \R^d)\times L^p(\Omega)$(up to  constants for $u$) such that (\ref{inhomogeneousStokes})  holds in the weak sense.
   Moreover, the solution $(u,\phi)$ satisfies the estimate,
    \begin{equation}\label{1.3}
    \|\nabla u\|_{L^p(\Omega)}+\|\phi\|_{L^p(\Omega)} \leq C\left\{ \|f\|_{L^p(\Omega)}+\|F\|_{L^p(\Omega)}+\|g\|_{B^{-1/p,p}(\partial\Omega)} \right\},
    \end{equation}
    where $C$ depends on $d, p$ and $\Omega$.
\end{thm}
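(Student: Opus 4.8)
The plan is to establish the a priori estimate \eqref{1.3} for $p$ in the stated range; once this is available, existence follows by approximating the data by $L^2$ data satisfying \eqref{comp}, solving the $p=2$ problem, and passing to the limit, while uniqueness follows from \eqref{1.3} itself when $p\ge 2$ (as then $\nabla u\in L^2(\Omega)$, reducing to the $p=2$ uniqueness) and from a duality identity when $p<2$. To prove \eqref{1.3} it suffices to treat the case $F=0$, $g=0$: by a normal-trace lifting, every $g\in B^{-1/p,p}(\partial\Omega;\R^d)$ admits $f_0\in L^p(\Omega;\R^{d\times d})$ with $\operatorname{div}(f_0)\in L^p$, $n\cdot f_0|_{\partial\Omega}=g$, and $\|f_0\|_{L^p}+\|\operatorname{div}(f_0)\|_{L^p}\le C\|g\|_{B^{-1/p,p}}$, so that replacing $f$ by $f-f_0$, $F$ by $F+\operatorname{div}(f_0)$, and $g$ by $0$ yields an equivalent problem still satisfying \eqref{comp}; the resulting $F$, now of mean zero componentwise, can in turn be put in divergence form by solving an auxiliary Neumann problem for $-\Delta$ on $\Omega$ and invoking the $W^{2,2}$ regularity of the Laplacian in convex domains. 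Thus the heart of the matter is the bound $\|\nabla u\|_{L^p(\Omega)}+\|\phi\|_{L^p(\Omega)}\le C\|f\|_{L^p(\Omega)}$ for the weak solution of $-\Delta u+\nabla\phi=\operatorname{div}(f)$, $\operatorname{div}(u)=0$ in $\Omega$, $\partial u/\partial\nu=-n\cdot f$ on $\partial\Omega$.

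For $2\le p<\frac{2d}{d-2}+\e$ (and any $p<\infty$ when $d=2$) I would use the real-variable (good-$\lambda$) method, whose $L^2$ input is the energy estimate \eqref{energy-1}. The local ingredient is a reverse Hölder inequality: if $(u,\phi)$ solves the homogeneous Stokes system in $D_{2r}=B(x_0,2r)\cap\Omega$ with $\partial u/\partial\nu=0$ on $B(x_0,2r)\cap\partial\Omega$, and $\bar\phi$ denotes the average of $\phi$ over $D_{2r}$, then
\[
\Big(\dashint_{D_r}|\nabla u|^{\bar p}\Big)^{1/\bar p}+\Big(\dashint_{D_r}|\phi-\bar\phi|^{\bar p}\Big)^{1/\bar p}\le C\Big(\dashint_{D_{2r}}|\nabla u|^2\Big)^{1/2}+C\Big(\dashint_{D_{2r}}|\phi-\bar\phi|^2\Big)^{1/2},
\]
with $\bar p=\frac{2d}{d-2}$ for $d\ge 3$ and $\bar p<\infty$ arbitrary for $d=2$ (and analogously for interior balls). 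This is where convexity enters: by the $W^{2,2}$ estimate for the Stokes equations in convex domains, together with a Caccioppoli-type inequality, one bounds $\|\nabla u\|_{W^{1,2}(D_r)}+\|\phi\|_{W^{1,2}(D_r)}$ by the right-hand side above, and the Sobolev embedding $W^{1,2}\hookrightarrow L^{\bar p}$ gives the claim. Writing $f=f\mathbf 1_{4B}+f\mathbf 1_{\Omega\setminus 4B}$, applying \eqref{energy-1} to the solution $u_1$ generated by the first piece and the reverse Hölder inequality to $u_2=u-u_1$, one verifies the hypotheses of the real-variable theorem and obtains $\|\nabla u\|_{L^p}+\|\phi\|_{L^p}\le C\|f\|_{L^p}$ for $2<p<\bar p$; the endpoint is improved to $\bar p+\e$ because the reverse Hölder exponent self-improves (Gehring's lemma), and the pressure estimate is carried along using that $\phi$ is harmonic in $D_{2r}$.

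For $\frac{2d}{d+2}-\e<p\le 2$ I would argue by duality. Given $h\in C_0^\infty(\Omega;\R^{d\times d})$, let $(v,\pi)$ solve the adjoint Neumann problem $-\Delta v+\nabla\pi=\operatorname{div}(h)$, $\operatorname{div}(v)=0$ in $\Omega$, $\partial v/\partial\nu=-n\cdot h$ on $\partial\Omega$ (note $n\cdot h|_{\partial\Omega}=0$ and $\operatorname{div}(h)$ has mean zero, so this is solvable); since $p'\in[2,\frac{2d}{d-2}+\e)$, the previous step applies to the adjoint and yields $\|\nabla v\|_{L^{p'}}\le C\|h\|_{L^{p'}}$, with no circularity. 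Testing the weak formulation \eqref{weak-NP} for $u$ against $v$ and that for $v$ against $u$, and using $\operatorname{div}(u)=\operatorname{div}(v)=0$, gives $\int_\Omega h\cdot\nabla u=\int_\Omega f\cdot\nabla v$, so $\big|\int_\Omega h\cdot\nabla u\big|\le\|f\|_{L^p}\|\nabla v\|_{L^{p'}}\le C\|f\|_{L^p}\|h\|_{L^{p'}}$; taking the supremum over $h$ gives $\|\nabla u\|_{L^p}\le C\|f\|_{L^p}$, and a parallel pairing controls $\|\phi\|_{L^p}$. The same identity with $f=0$ forces a $W^{1,p}$ weak solution of the homogeneous problem to be constant, which gives uniqueness for $p<2$.

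I expect the crux to be the $W^{2,2}$ estimate in convex domains and its localization to the reverse Hölder inequality above. For the scalar Laplacian, convexity gives $\int_\Omega|\nabla^2 u|^2\le\int_\Omega|\Delta u|^2$ directly from the nonnegativity of the second fundamental form; for the Stokes system one must simultaneously control the pressure in $W^{1,2}$, respect the constraint $\operatorname{div}(u)=0$, and perform the cutoff so that the boundary terms generated by the integration by parts keep a favorable sign on the convex boundary. Once this is in hand, verifying the hypotheses of the real-variable scheme and the duality bookkeeping is comparatively routine.
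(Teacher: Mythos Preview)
Your core strategy matches the paper's: both derive a reverse H\"older inequality for $|\nabla u|+|\phi|$ from the localized $W^{2,2}$ estimate plus Sobolev embedding, feed it into the real-variable scheme to get the $(f,0,0)$ bound for $2<p<\frac{2d}{d-2}+\e$, and then handle $p<2$ by duality.

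Where you diverge is in the treatment of $F$ and $g$. The paper does \emph{not} reduce to the $(f,0,0)$ case by lifting; instead it treats $(0,0,g)$ and $(0,F,0)$ each by a short duality pairing against the $(f,0,0)$ solution operator (its Lemmas~4.5 and~4.6), using only Poincar\'e and trace inequalities. Your lifting route is also legitimate in principle, but the justification you give for absorbing $F$ is incomplete: the $W^{2,2}$ regularity of the scalar Neumann Laplacian on a convex domain only yields $\|\nabla\psi\|_{L^p}\le C\|F\|_{L^2}$ for $2\le p\le \frac{2d}{d-2}$, which neither covers $p<2$ nor reaches the endpoint $\frac{2d}{d-2}+\e$. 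The fix is easy---use the Bogovski\u{\i} operator (giving $h\in W^{1,p}_0$ with $\operatorname{div}(h)=F$ and hence $n\cdot h=0$, on any bounded Lipschitz domain and for all $1<p<\infty$), or quote the full $W^{1,p}$ theory for the scalar Neumann problem on convex domains---but as written this reduction step does not close. The paper's three-case decomposition sidesteps the issue entirely.
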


\begin{remark}
If $\Omega$ is a bounded Lipschitz domain in $\mathbb{R}^d, d\geq 2$,  the $W^{1,p}$ estimate \eqref{1.3}  holds for $p$ satisfying
\begin{equation}\label{w1p-p-Lipschitz}
\big| \frac{1}{p}-\frac12 \big|< \frac{1}{2d} +\e.
\end{equation}
This was established  in  \cite{GK-2009}
 for the Stokes equations with the Dirichlet condition.
  The method used in \cite{GK-2009} extends readily to  the Neumann problem.
However, it is not known whether the $W^{1, p}$ estimate holds for the Stokes equations with the Dirichlet condition
 in a convex domain for $p$ satisfying \eqref{w1p-p-convex}.
 The approach  used in this paper does not seem to extend to the Dirichlet problem.
\end{remark}

During the last decades, considerable progress has been made in the study of  second order elliptic equations and systems in non-smooth domains.
For Laplace's equation $\Delta u=0$ in a Lipschitz domain, it is well-known that the $L^p$ Dirichlet and $L^p$ Neumann problems are uniquely solvable with sharp ranges $2-\varepsilon < p \leq \infty$ and $1<p< 2+\e$ respectively, where $\varepsilon$ depends on the Lipschitz character of $\Omega$ \cite{Kenig-book}. 
For Laplace's equation in a  convex domain,
the $L^p$ Dirichlet problem is solvable for $1<p\le \infty$, while the $L^p$ Neumann
problem is solvable for $1<p<\infty$.
The Dirichlet case follows  readily by using the maximum principle.
The Neumann case, which was studied  in \cite{Kim-Shen-2008, Mazya-2009, Geng-Shen-2010, Lewis-2014},   is much more involved.

We now move on to the Stokes equations. As mentioned earlier,
the $L^2$ Dirichlet and Neumann problems in a Lipschitz domain  were solved in \cite{FKV-1988} by the method of layer potentials.
Due to the lack of maximum principles and De Giorgi-Nash-Moser estimates,
the case $p\neq 2$ has been very challenging..
It is known that the $L^p$ Dirichlet problem is solvable for $2-\e< p \le \infty$ if $d=2$ or $3$ \cite{Shen-1995},
and for $2-\e< p< \frac{2(d-1)}{d-3} +\e$ if $d\ge 4$ \cite{Kilty-2009}.
It is also proved  that the  $L^p$ Neumann problem is solvable for $1< p< 2+\e$ if $d=2$ or $3$, and for $\frac{2(d-1)}{d+1}-\e< p< 2+\e$ if  $d\ge 4$ \cite{MW-2012}.
 The methods used in \cite{Kilty-2009, MW-2012} for $d\ge 4$ are similar to those developed  in \cite{Shen-2006, Shen-2007} for
 second-order elliptic systems.
  The question of the sharp range of $p$'s  remains open for both the Dirichlet and Neumann problems in the case $d\ge 4$,
  as in the case of second-order elliptic systems.

Finally, we  describe the proofs  of Theorems \ref{NeumannProblem} and \ref{W1p}.
For $x_0\in \partial\Omega$ and $0< r<r_0$,
let $D(x_0, r)=  B(x_0, r)\cap \Omega$ and $I(x_0, r) =B(x_0, r) \cap \partial\Omega$.
Our approaches  to both theorems are based on  a new $W^{2,2}$ estimate for the Stokes equations on a convex domain,
\begin{equation}\label{1.8}
 \int_{D(x_0,r)} \left(  |\nabla^2 u|^2 +|\nabla \phi|^2 \right) \,dx \leq
        \frac{C}{r^2}\int_{D(x_0,2r)}\left(  |\nabla u|^2 +|\phi|^2 \right) \,dx,
\end{equation}
where  $\Omega$ is convex and $u$ satisfies
\begin{equation}\label{local-sol}
\left\{
\aligned
-\Delta u+\nabla \phi & =0 & \quad  & \text{ in }  D(x_0,4r),\\
 ~\text{\rm div}(u)  & = 0 & \quad & \text{ in } D(x_0,4r), \\
 \frac{\partial u}{\partial\nu} & =0 & \quad & \text{ on }  I(x_0,4r).
 \endaligned
 \right.
\end{equation}
To prove \eqref{1.8}, we use a well-known integral identity (see Lemma \ref{convex})  with
 $v=w_\beta=\big(\{ \partial_k u_\beta-\delta_{k\beta}\phi\}\xi\big)_{k=1}^d$
for $1\leq \beta \leq d$ and $\xi \in C_0^\infty(B(x_0, 2r))$, to obtain
$$
        \int_{\Omega} |{\rm div}(v)|^2\,dx\geq \sum_{i,j=1}^d\int_{\Omega}\frac{\partial v_i}{\partial x_j}\frac{\partial v_j}{\partial x_i}\,dx,
$$
which yields \eqref{1.8} by using the Stokes equations as well as the Cauchy inequality.
We point out that a similar observation was used in \cite{Tolksdorf-2020} to study the $L^p$ resolvent estimates for the
Stokes equations with Neumann conditions.

With the estimate \eqref{1.8} at our disposal, to prove Theorem \ref{NeumannProblem},
we start with the same idea as that  in \cite{Kim-Shen-2008} for Laplace's equation in a convex domain.
By utilizing the real variable method developed by Shen \cite{Shen-2005, Shen-2006},
we reduce the $L^p$ estimates \eqref{1.2} for $p>2$  to the following weak reverse H\"older inequality:
 \begin{equation}\label{re-00}
        \left(\fint_{I(x_0,r)}|(\nabla u)^* +(\phi)^* |^p \right)^{1/p}
        \leq C
        \left(\fint_{I(x_0, 4r)}|(\nabla u)^* + (\phi)^* |^2 \right)^{1/2},
    \end{equation}
    where $(u, \phi)$ satisfies \eqref{local-sol}.
    We then apply the square function estimates for the Stokes equations as well as the interior estimates  to show that
\begin{equation}\label{1.9}
\aligned
           \fint_{I_r} |(\nabla u)^* +(\phi)^* |^p \leq &
            C r^{\gamma+1}\sup_{x\in D_{2r}}\bigg\{ (|\nabla
        u| + | \phi|) ^{p-2}[\delta(x)]^{1-\gamma} \bigg\}\fint_{D_{2r}}
       \left(  |\nabla^2 u| +|\nabla \phi|   \right)^2\\
&       \qquad   +C\bigg(\fint_{I_{4r}}|(\nabla u)^*|^2\bigg)^{p/2}
\endaligned
\end{equation}
for any $\gamma>0$,
where $I_r =I(x_0, r)$,  $D_r=D(x_0, r)$, and $\delta(x)=\text{\rm dist}(x, \partial\Omega)$.
The estimate \eqref{1.8} is  used to control the integral of
$(|\nabla^2 u| + |\nabla \phi| )^2$ in the right-hand side of \eqref{1.9}.
To handle the term involving $\sup_{x\in D_{2r}}$, we consider
 the  weak reverse H\"older inequality, 	
\begin{equation}\label{rev-0}
\left(\fint_{D_{3r}}( |\nabla u| +|\phi|)^q
		\right)^{1/q}  \leq C
		\left(\fint_{D_{4r}}( |\nabla u | +|\phi|)^2\right)^{\frac{1}{2}},
\end{equation}
where $(u, \phi )$ satisfies  \eqref{local-sol}.
 The estimate \eqref{rev-0} with $q=p_d=\frac{2d }{d-2}$  for $d\ge 3$ follows from \eqref{1.8} by using a Sobolev  inequality.
The self-improving property of the reverse H\"older inequality yieldst \eqref{rev-0}  for $q=p_d +\e_0$ and $d\ge 3$, where $\e_0>0$
depends on the Lipschitz character of $\Omega$.
 If $d=2$, \eqref{rev-0} holds for any $q>2$.
This, together with the interior estimates, allows us to reduce  the power of $\delta(x)$ in \eqref{1.9}, which needs to be nonnegative, to
\begin{equation*}
   -\frac{d}{q} (p-2)  + 1 -\gamma.
   \end{equation*}
By choosing $\gamma$ sufficiently small, we obtain   the upper bounds  for $p$ in Theorem \ref{NeumannProblem}.

To prove Theorem \ref{W1p},  by a duality argument, one only needs to consider the case $F=0$ and $g=0$ for $p>2$.
In this case, the results follow from \eqref{rev-0} by using the real variable method.




\section{ $W^{2,2}$ estimates}\label{W-2-2}

The  goal of this section is to establish a localized  $W^{2,2}$ estimate for the Stokes equations in a convex domain with the Neumann boundary condition.
As indicated in the Introduction,
the estimate  plays a key role in the proofs of Theorems \ref{NeumannProblem} and  \ref{W1p}.

For $x_0 \in \partial\Omega$ and $0< r< r_0$, let $D(x_0, r) = B(x_0, r)\cap \Omega $ and $I(x_0, r)=B(x_0, r) \cap \partial\Omega$.

\begin{thm} \label{W22}
    Let $\Omega$ be a bounded convex domain with $C^2$ boundary.
     Suppose that $u\in C^2(\overline{D(x_0, 2r)}; \mathbb{R}^d)$, $\phi \in C^1(\overline{D(x_0, 2r)})$, and
    \begin{equation}\label{2.0}
    \left\{
    \aligned
      -  \Delta u +\nabla \phi   & = 0 & \quad & \mbox{ in }  D(x_0, 2r), \\
        \text{\rm div}  (u)   & = 0 & \quad &   \mbox{ in }  D(x_0, 2r),\\
        \frac{\partial u}{\partial \nu} & =0 & \quad & \mbox{ on }  I(x_0, 2r),
    \endaligned
    \right.
    \end{equation}
    where $x_0\in \partial\Omega$ and $0< r< r_0$.
  Then,
    \begin{equation*}
        \int_{D(x_0,r)}( |\nabla^2 u| +|\nabla \phi|) ^2 \leq
        \frac{C}{r^2}\int_{D(x_0,2r)} ( |\nabla u| + |\phi|) ^2,
    \end{equation*}
where $C$ depends only on $d$.
\end{thm}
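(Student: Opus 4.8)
The plan is to carry out the argument sketched in the Introduction: apply the convexity integral identity of Lemma~\ref{convex} to a suitable vector field built from $\nabla u$ and $\phi$. Fix a cutoff $\xi\in C_0^\infty(B(x_0,2r))$ with $\xi\equiv 1$ on $B(x_0,r)$ and $|\nabla\xi|\le C/r$, and for each $1\le\beta\le d$ set
\[
w_\beta=\big(\{\partial_k u_\beta-\delta_{k\beta}\phi\}\,\xi\big)_{k=1}^{d},
\]
extended by zero to all of $\Omega$; since $\xi$ is compactly supported in $B(x_0,2r)$ and $u\in C^2(\overline{D(x_0,2r)})$, $\phi\in C^1(\overline{D(x_0,2r)})$, each $w_\beta$ is a well-defined field that is $C^1$ up to the boundary, so Lemma~\ref{convex} applies to it. The first point to check is $w_\beta\cdot n=0$ on $\partial\Omega$: on $\partial\Omega\setminus B(x_0,2r)$ this holds because $\xi=0$, while on $I(x_0,2r)$ one computes, using \eqref{co},
\[
w_\beta\cdot n=\xi\sum_{k}n_k\big(\partial_k u_\beta-\delta_{k\beta}\phi\big)
=\xi\Big(\frac{\partial u_\beta}{\partial n}-\phi\,n_\beta\Big)=\xi\Big(\frac{\partial u}{\partial\nu}\Big)_\beta=0
\]
by the Neumann condition in \eqref{2.0}. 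Lemma~\ref{convex} then yields, for each $\beta$,
\[
\int_\Omega|\operatorname{div}(w_\beta)|^2\,dx\ \ge\ \sum_{i,j=1}^{d}\int_\Omega\frac{\partial (w_\beta)_i}{\partial x_j}\,\frac{\partial (w_\beta)_j}{\partial x_i}\,dx,
\]
the convexity of $\Omega$ being precisely what gives the boundary term (an integral of the second fundamental form contracted with the tangential field $w_\beta$) the favorable sign.

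Next I would evaluate both sides using the Stokes system. Writing $\psi^\beta_k=\partial_k u_\beta-\delta_{k\beta}\phi$, the first equation in \eqref{2.0} gives $\sum_k\partial_k\psi^\beta_k=\Delta u_\beta-\partial_\beta\phi=0$, so that
\[
\operatorname{div}(w_\beta)=\sum_k\psi^\beta_k\,\partial_k\xi=\nabla\xi\cdot(\nabla u_\beta-\phi\,e_\beta),\qquad\text{hence}\qquad
\sum_\beta|\operatorname{div}(w_\beta)|^2\le C\,|\nabla\xi|^2\big(|\nabla u|^2+|\phi|^2\big).
\]
For the right-hand side, the product rule splits $\partial_j(w_\beta)_i\,\partial_i(w_\beta)_j$ into a principal term $\xi^2(\partial_j\psi^\beta_i)(\partial_i\psi^\beta_j)$, two ``mixed'' terms of schematic form $\xi\,\nabla\xi\cdot\psi^\beta\,\nabla\psi^\beta$, and a term $\psi^\beta_i\psi^\beta_j\,\partial_i\xi\,\partial_j\xi$. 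Summing the principal term over $i,j,\beta$ and using $\operatorname{div}(u)=0$, the contributions pairing $\nabla^2u$ with $\nabla\phi$ cancel identically, leaving exactly $\xi^2\big(|\nabla^2u|^2+|\nabla\phi|^2\big)$.

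It remains to absorb the lower-order pieces. Each mixed term carries one factor of $\nabla\xi$ and one second-order factor ($\nabla^2u$ or $\nabla\phi$), so by Cauchy's inequality with a small parameter it is bounded by $\tfrac12\xi^2(|\nabla^2u|^2+|\nabla\phi|^2)+C|\nabla\xi|^2(|\nabla u|^2+|\phi|^2)$, and the last term is directly $\le C|\nabla\xi|^2(|\nabla u|^2+|\phi|^2)$. Inserting these bounds and the estimate for $\operatorname{div}(w_\beta)$ into the inequality from Lemma~\ref{convex}, summing over $\beta$, and absorbing $\tfrac12\int_\Omega\xi^2(|\nabla^2u|^2+|\nabla\phi|^2)$ into the left-hand side, one gets
\[
\int_\Omega\xi^2\big(|\nabla^2u|^2+|\nabla\phi|^2\big)\,dx\ \le\ C\int_\Omega|\nabla\xi|^2\big(|\nabla u|^2+|\phi|^2\big)\,dx.
\]
Since $\xi\equiv1$ on $B(x_0,r)$, $|\nabla\xi|\le C/r$ and $\operatorname{supp}\xi\subset B(x_0,2r)$, and since $(|\nabla^2u|+|\nabla\phi|)^2\le 2(|\nabla^2u|^2+|\nabla\phi|^2)$ while $|\nabla u|^2+|\phi|^2\le(|\nabla u|+|\phi|)^2$, this gives the asserted estimate with $C=C(d)$.

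The crux of the argument — and the only place convexity is used — is the sign of the boundary term in Lemma~\ref{convex}. Besides that, the two delicate points are (i) the identity $w_\beta\cdot n=0$ on $\partial\Omega$, which is where the Neumann condition enters, and (ii) the exact cancellation of the $\nabla^2u\cdot\nabla\phi$ cross terms in the principal term, which rests on $\operatorname{div}(u)=0$ (while the first equation is what makes $\operatorname{div}(w_\beta)$ genuinely lower order); the rest is routine bookkeeping with the Cauchy inequality. For less regular solutions one would first apply the resulting estimate to smooth approximations and pass to the limit.
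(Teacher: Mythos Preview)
Your proof is correct and follows essentially the same approach as the paper: you choose the same vector field $w_\beta$, invoke Lemma~\ref{convex} together with the convexity of $\Omega$ and the Neumann condition to drop the boundary terms, and then use $\Delta u=\nabla\phi$, $\operatorname{div}(u)=0$, and the Cauchy inequality to extract the principal term $\xi^2(|\nabla^2u|^2+|\nabla\phi|^2)$ while absorbing the mixed and lower-order pieces. The only cosmetic difference is the support of the cutoff ($B(x_0,2r)$ versus $B(x_0,3r/2)$), which is immaterial.
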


The  proof of Theorem  \ref{W22} relies on  a well-known  integral identity.
In the  next lemma, we use $v_{\tan}$ to denote the tangential component of a vector $v$ and
$\nabla_{\tan} w$ the tangential gradient of  a scalar $w$.
That is, $v_{\tan}= v-(v\cdot n) n$ and $\nabla_{\tan} w = (\nabla w)_{\tan}= \nabla w -(\nabla w \cdot n) n$.

\begin{lemma}\label{convex}
    Let $\Omega$ be a bounded  domain with $C^2$ boundary. Suppose that $v\in C^1(\overline{\Omega}; \R^d)$. Then
    \begin{align}
        \int_{\Omega} |\text{\rm div} (v)|^2\,dx-\sum_{i,j=1}^d\int_{\Omega}\frac{\partial v_i}{\partial x_j}\frac{\partial v_j}{\partial x_i}\,dx &=
        -2\int_{\partial\Omega}v_{\tan}\cdot\nabla_{\tan}(v\cdot n)\, d\sigma\\\nonumber
        &-\int_{\partial\Omega}\big(\beta(v_{\tan}; v_{\tan})+(tr\beta)|v\cdot n |^2\big)\, d\sigma,
    \end{align}
where $\beta(\cdot;\cdot)$ is the second fundamental quadratic form of $\partial\Omega$ and $tr\beta$ denotes the trace of $\beta$.
\end{lemma}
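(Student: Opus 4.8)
The plan is to derive the identity by integrating by parts twice and carefully tracking the boundary terms, using a boundary coordinate frame adapted to $\partial\Omega$. First I would start from the left-hand side and integrate by parts in the second integral: for each pair $i,j$,
$$
\int_\Omega \frac{\partial v_i}{\partial x_j}\frac{\partial v_j}{\partial x_i}\,dx
= -\int_\Omega v_i\,\frac{\partial^2 v_j}{\partial x_i \partial x_j}\,dx
+ \int_{\partial\Omega} v_i\,\frac{\partial v_j}{\partial x_i}\,n_j\,d\sigma.
$$
Doing the same for $\int_\Omega |\mathrm{div}(v)|^2\,dx = \int_\Omega \big(\sum_j \partial_j v_j\big)^2\,dx$ by moving one derivative off, one gets $-\int_\Omega v_i\,\partial_i\big(\mathrm{div}(v)\big)\,dx + \int_{\partial\Omega} v_i\,\mathrm{div}(v)\,n_i\,d\sigma$. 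The two interior integrals are identical after summing, so they cancel, and we are left with
$$
\int_{\Omega} |\mathrm{div}(v)|^2\,dx - \sum_{i,j}\int_{\Omega}\frac{\partial v_i}{\partial x_j}\frac{\partial v_j}{\partial x_i}\,dx
= \int_{\partial\Omega} \Big( (v\cdot n)\,\mathrm{div}(v) - v_i\,n_j\,\partial_i v_j \Big)\,d\sigma.
$$
So the whole statement reduces to a purely boundary computation: showing that this boundary integrand equals $-2\,v_{\tan}\cdot\nabla_{\tan}(v\cdot n) - \beta(v_{\tan};v_{\tan}) - (\mathrm{tr}\,\beta)|v\cdot n|^2$ after integration over $\partial\Omega$.

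Next I would fix a point of $\partial\Omega$ and work in an orthonormal frame $\{\tau_1,\dots,\tau_{d-1},n\}$ where $\tau_\alpha$ are principal tangent directions, so that $\partial_{\tau_\alpha} n = \kappa_\alpha \tau_\alpha$ (no sum), with $\kappa_\alpha$ the principal curvatures; then $\beta(\tau_\alpha;\tau_\beta) = \kappa_\alpha\delta_{\alpha\beta}$ and $\mathrm{tr}\,\beta = \sum_\alpha \kappa_\alpha$. Decompose $v = v_{\tan} + (v\cdot n)n$ and expand $\mathrm{div}(v)$ and $n_j\partial_i v_j\, \cdot$(contracted with $v_i$, i.e. $\partial_{v}v \cdot n$) in this frame at the base point, using that the covariant derivatives of the frame vectors along tangential directions produce exactly the second-fundamental-form terms. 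The term $(v\cdot n)\mathrm{div}(v)$ contributes $(v\cdot n)\big(\mathrm{div}_{\tan}(v_{\tan}) + \partial_n(v\cdot n) + (\mathrm{tr}\,\beta)(v\cdot n)\big)$ and the term $v_i n_j \partial_i v_j = v\cdot \partial_v n + (\text{normal-derivative terms})$ reorganizes similarly; the normal-derivative pieces $\partial_n(v\cdot n)$ cancel between the two contributions. What survives, after integrating $(v\cdot n)\,\mathrm{div}_{\tan}(v_{\tan})$ by parts on the closed manifold $\partial\Omega$ to move the tangential divergence onto $v\cdot n$, is precisely $-2\,v_{\tan}\cdot\nabla_{\tan}(v\cdot n)$ together with the curvature terms $-\beta(v_{\tan};v_{\tan}) - (\mathrm{tr}\,\beta)|v\cdot n|^2$. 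I would present this as a local calculation at an arbitrary boundary point, which suffices since the identity is an integral one, invoking the divergence theorem on $\partial\Omega$ (no boundary of $\partial\Omega$, as $\partial\Omega$ is closed) to legitimize the tangential integration by parts.

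The main obstacle, and the step deserving the most care, is the bookkeeping of the boundary terms: one must correctly identify which derivatives of $v$ are tangential versus normal, and extract the second-fundamental-form contributions from $\partial_{\tau_\alpha}\tau_\beta$ and $\partial_{\tau_\alpha} n$ without sign errors, while also correctly performing the integration by parts of the tangential divergence on $\partial\Omega$ (which is where the factor $2$ in front of $v_{\tan}\cdot\nabla_{\tan}(v\cdot n)$ appears — from one copy in the raw integrand plus one copy produced by moving $\mathrm{div}_{\tan}$). A clean way to organize this, and the route I would actually write up, is to avoid explicit Christoffel symbols by instead choosing a specific convenient extension of $n$ to a neighborhood (e.g. $n = -\nabla\delta$ where $\delta$ is the signed distance function, so that $\nabla n = -\nabla^2\delta$ is symmetric and, restricted to $\partial\Omega$, its tangential block is the shape operator while $\nabla n\cdot n = 0$), substitute this into $\int_{\partial\Omega}\big((v\cdot n)\,\mathrm{div}(v) - \partial_v v\cdot n\big)\,d\sigma$, and simplify using $\mathrm{div}(v) = \mathrm{div}(v_{\tan}) + \mathrm{div}((v\cdot n)n)$ together with $\mathrm{div}(n)|_{\partial\Omega} = \mathrm{tr}\,\beta$. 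This keeps everything in ambient coordinates and makes the cancellations transparent.
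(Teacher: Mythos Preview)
The paper does not actually prove this lemma; it simply cites Grisvard's book (pp.~134--138). Your outline is correct and is essentially the standard argument found there: reduce to a boundary integral via two integrations by parts (the interior second-order terms cancel), then decompose $v=v_{\tan}+(v\cdot n)n$ on $\partial\Omega$, use the Weingarten relations in a principal frame (or equivalently the signed-distance extension of $n$), and integrate the tangential divergence by parts on the closed hypersurface $\partial\Omega$ to produce the factor $2$ and the curvature terms. One small point: your first step formally requires $v\in C^2(\overline{\Omega})$ to write $\partial_i\partial_j v_j$, whereas the lemma only assumes $v\in C^1(\overline{\Omega})$; you should insert a routine mollification/approximation and pass to the limit, since both sides of the identity are continuous in the $C^1(\overline{\Omega})$ topology.
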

\begin{proof}
See \cite[pp.134-138]{Grisvard}.
\end{proof}


\begin{proof}[Proof of Theorem \ref{W22}]
Let $\xi \in C_{0}^{\infty}(\mathbb{R}^d)$ be a cut-off function such that
$$
\xi= \begin{cases}1 & \text { on } B(x_0,r) ,\\[0.1cm]
 0 & \text { outside } B(x_0, \frac{3}{2}r),
 \end{cases}
$$
and $|\nabla \xi|\leq C/r$.
For each $1\le \beta\le d$,  set
\begin{equation}\label{w}
w_\beta=\bigg(\{\partial_k u_\beta-\delta_{k\beta}\phi\}\xi\bigg)_{k=1}^d.
\end{equation}
Then $w_\beta\in C^1(\overline{\Omega}; \R^d)$.
Since $\frac{\partial u}{\partial\nu}=0$ on $I(x_0,2r)$,
we have  $ w_\beta\cdot n =0$ on $\partial\Omega$.
By a  direct computation and using the equation $\Delta u=\nabla \phi$, we obtain
\begin{align}\label{3.1}
\text{div}(w_\beta)=\big(\partial_k u_\beta-\delta_{k\beta}\phi\big)\partial_k\xi,
\end{align}
where the repeated indix $k$ is summed from $1$ to $d$.
Moreover, for $1\leq i, j \leq d$,
\begin{align}\label{3.3}
\frac{\partial (w_\beta)_i}{\partial x_j}
=\big(\partial_j\partial_i u_\beta-\delta_{i\beta}\partial_j\phi\big)\xi+\big(\partial_i u_\beta-\delta_{i\beta}\phi\big)\partial_j\xi.
\end{align}
It follows  that
\begin{align}\label{3.5}
\frac{\partial (w_\beta)_i}{\partial x_j} \cdot
\frac{\partial (w_\beta)_j}{\partial x_i}&=\big(\partial_j\partial_i u_\beta\partial_i\partial_j u_\beta+\partial_\beta \phi \partial_\beta \phi\big)\xi^2\\\nonumber
&\quad+\big(\partial_j\partial_i u_\beta\partial_j u_\beta-\delta_{i\beta}\partial_j\phi\partial_j u_\beta+\delta_{i\beta}\partial_\beta \phi \phi\big)\xi\partial_i\xi\\\nonumber
&\quad\quad+\big(\partial_i u_\beta\partial_i\partial_j u_\beta-\delta_{j\beta}\partial_i\phi\partial_i u_\beta+\delta_{j\beta}\partial_\beta \phi \phi \big)\xi\partial_j\xi\\\nonumber
&\quad\quad\quad+\big(\partial_i u_\beta\partial_j u_\beta-\delta_{j\beta}\partial_i u_\beta\phi-\delta_{i\beta}\partial_j u_\beta \phi +\delta_{i\beta}\delta_{j\beta}\phi^2\big)\partial_i\xi\partial_j\xi,
\end{align}
where the repeated indices are summed  from $1$ to $d$ and we have used the fact $\text{\rm div}(u)=0$.
We now   apply Lemma \ref{convex} with $v=w_\beta$.
Since $v\cdot n=0$ on $\partial\Omega$ and
$$
-\beta(v_{\tan}; v_{\tan})\geq 0~~~\text{on}~~~~\partial\Omega,
$$
which follows from the assumption that $\Omega$ is convex,
we obtain
$$
        \int_{\Omega} |\text{\rm div}(v)|^2\,dx\geq \sum_{i,j=1}^d\int_{\Omega}\frac{\partial v_i}{\partial x_j}\frac{\partial v_j}{\partial x_i}dx.
$$
By the Cauchy inequality, this, together with \eqref{3.1} and \eqref{3.5}, yields
\begin{equation}\label{3.6}
        \int_{D(x_0,r)} \big(|\nabla^2 u|^2+|\nabla \phi|^2\big) \leq
        \frac{C}{r^2}\int_{D(x_0,\frac{3}{2}r)} \big(|\nabla u|^2+|\phi|^2\big),
\end{equation}
where $C$ depends only on $d$.
\end{proof}

\begin{cor}\label{new-cor}
Let $(u, \phi)$ be the same as in Theorem \ref{W22}.
 Then
\begin{equation}\label{new-1}
\left(\fint_{D(x_0, r)} \left( |\nabla u| +|\phi|\right)^p \right)^{1/p}
\le C \left(\fint_{D(x_0, 2r)} \left( |\nabla u| +|\phi|\right)^2 \right)^{1/2}
\end{equation}
for  $2< p< \frac{2d}{d-2} +\e$, where $d\ge 3$ and $C>0$, $\e>0$ depend only on the Lipschitz character of $\Omega$.
If $d=2$, the estimate \eqref{new-1} holds for any $p>2$.
\end{cor}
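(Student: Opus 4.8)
The plan is to derive \eqref{new-1} in two stages: first establish it for the single exponent $p=p_d:=\frac{2d}{d-2}$ when $d\ge 3$ (for $d=2$ the same argument will already give every $p>2$), and then upgrade to $p_d+\e$ via the self-improving property of weak reverse H\"older inequalities.

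For the base case I would write $h=|\nabla u|+|\phi|$ and use that $D(x_0,r)=B(x_0,r)\cap\Omega$, being the intersection of two convex sets, is convex; hence the Sobolev--Poincar\'e inequality holds on it with a constant depending only on $d$. Applying it to each component of $\nabla u$ and to $\phi$ --- all of which lie in $W^{1,2}(D(x_0,r))$ by Theorem \ref{W22} --- gives, for $d\ge 3$,
\[
\Big(\fint_{D(x_0,r)}\big|\nabla u-\overline{\nabla u}\big|^{p_d}\Big)^{1/p_d}
\le C(d)\,r\Big(\fint_{D(x_0,r)}|\nabla^2 u|^2\Big)^{1/2}
\]
and the analogous bound for $\phi$. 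Inserting the estimate of Theorem \ref{W22} (after dividing by $|D(x_0,r)|\simeq r^d$, which introduces a factor $|D(x_0,2r)|/|D(x_0,r)|$ controlled by the Lipschitz character of $\Omega$), together with $|\overline{\nabla u}|\le(\fint_{D(x_0,r)}|\nabla u|^2)^{1/2}$ and $\fint_{D(x_0,r)}h^2\le C\fint_{D(x_0,2r)}h^2$, yields
\[
\Big(\fint_{D(x_0,r)}h^{p_d}\Big)^{1/p_d}\le C\Big(\fint_{D(x_0,2r)}h^2\Big)^{1/2}.
\]
When $d=2$ the same computation with the embedding $W^{1,2}\hookrightarrow L^q$, valid for every $q<\infty$ on a bounded convex domain, establishes \eqref{new-1} for any $p>2$ and finishes that case; for $2<p\le p_d$ and $d\ge 3$, \eqref{new-1} then follows by H\"older's inequality.

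To upgrade the exponent when $d\ge 3$, I would note that the base-case argument applies verbatim to every boundary ball $D(z,s)$ with $z\in\partial\Omega$ and $D(z,2s)\subset D(x_0,2r)$, since $(u,\phi)$ still solves \eqref{2.0} there with zero Neumann data on $I(z,2s)$. For an interior ball $B(z,2s)\subset\Omega\cap B(x_0,2r)$ the classical interior $W^{2,2}$ estimate for the Stokes system combined with Sobolev gives the same reverse H\"older bound on $B(z,s)$, and a routine covering argument handles intermediate balls that meet $\partial\Omega$ without being centered on it. Thus $\big(\fint_{B\cap\Omega}h^{p_d}\big)^{1/p_d}\le C\big(\fint_{2B\cap\Omega}h^2\big)^{1/2}$ for all balls $B$ with $4B\subset B(x_0,2r)$. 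Since $p_d>2$, the self-improving property of weak reverse H\"older inequalities (\cite{Shen-2005}; see also \cite{Shen-2006}) then produces $\e_0=\e_0(\Omega)>0$ with $\big(\fint_{D(x_0,r)}h^{p_d+\e_0}\big)^{1/(p_d+\e_0)}\le C\big(\fint_{D(x_0,2r)}h^2\big)^{1/2}$, and H\"older's inequality converts this into \eqref{new-1} for all $2<p<p_d+\e_0=\frac{2d}{d-2}+\e$.

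The step I expect to be the main obstacle is not any single inequality but the bookkeeping that makes the self-improving lemma applicable: one must verify that the reverse H\"older inequality genuinely holds at all scales and locations inside $B(x_0,2r)$ --- using Theorem \ref{W22} and the convexity of $D(z,s)$ at the boundary, and classical interior estimates away from it --- and one must track carefully the passage from integrals to averages, which is precisely where the restriction $r<r_0$ and the dependence of $C$ and $\e$ on the Lipschitz character of $\Omega$ enter, through the volume comparisons $|D(z,2s)|\simeq|D(z,s)|\simeq s^d$.
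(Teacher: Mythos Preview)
Your proposal is correct and follows the paper's approach: Sobolev applied to $(\nabla u,\phi)$ plus Theorem~\ref{W22} gives \eqref{new-1} at $p_d$, interior estimates extend this to all balls, and self-improvement of the weak reverse H\"older inequality yields $p_d+\e$. One minor correction: the Sobolev--Poincar\'e constant on a convex set depends on its aspect ratio $\mathrm{diam}(D)^d/|D|$, not only on $d$, so your ``$C(d)$'' should be a constant depending on the Lipschitz character of $\Omega$ --- which you correctly identify at the end and which is exactly what the paper states.
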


\begin{proof}
 We  apply   the Sobolev  inequality,
 \begin{equation}\label{Sob}
 \left(\fint_{D(x_0, r)} |w|^{p_d} \right)^{1/p_d}
 \le C r \left(\fint_{D(x_0, r)} |\nabla w|^2 \right)^{1/2}
 + C \left(\fint_{D(x_0, r)} |w|^2 \right)^{1/2},
 \end{equation}
  to $(\nabla u, \phi)$,
 where $p_d>2$ if $d=2$, and $p_d=\frac{2d}{d-2}$ if $d\ge 3$.
 The constant $C$ in \eqref{Sob} depends only on  the Lipschitz character of $\Omega$.
 As a result, we obtain
 $$
 \aligned
 \left(\fint_{D(x_0, r)} ( |\nabla u|+ |\phi|) ^{p_d} \right)^{1/p_d}
  & \le C r \left(\fint_{D(x_0, r)}( |\nabla ^2 u| +|\nabla \phi| )^2 \right)^{1/2}
 + C \left(\fint_{D(x_0, r)}  (|\nabla u| + |\phi|) ^2 \right)^{1/2}\\
 &\le C \left(\fint_{D(x_0, 2r)}  (|\nabla u| + |\phi|) ^2 \right)^{1/2},
\endaligned
$$
where we have used Theorem \ref{W22}.
Note that by interior estimates for the Stokes equations (see Lemma \ref{InteriorEstimates}), the $W^{2, 2}$ estimate  in Theorem \ref{W22} continues to hold if  $x_0\in \Omega$.
It follows that the estimate \eqref{new-1}  also holds for $p=p_d$ and  $x_0\in \Omega$.
This allows us to use the self-improving property of the reverse H\"older inequality to upgrade the estimate \eqref{new-1} from $p_d$ to $p_d+\e$ for $d\ge 3$.
\end{proof}



\section{Nontangential-maximal-function estimates}

In this section we give the proof of Theorem \ref{NeumannProblem}. We first  establish a sufficient condition
 for the $L^p$ solvability of the Neumann problem \eqref{stokesSystem} for $p>2$.
 Recall that $I(x_0, r) =B(x_0, r) \cap \partial\Omega$.

\begin{thm} \label{SufficientCondition}
    Let $\Omega$ be a bounded Lipschitz domain in $\rn{d}$, $d\geq 2$, and $p>2$.  Suppose that there
    exists $C>0$ such that for any $x_0\in
    \partial\Omega$ and $0<r<r_0$, the weak reverse H\"{o}lder inequality,

    \begin{equation} \label{reverseHolder}
        \left(\fint_{I(x_0,r)}|(\nabla u)^* + (\phi)^* |^p \right)^{1/p}
        \leq C
        \left(\fint_{I(x_0,2r)}|(\nabla u)^* + (\phi)^* |^2 \right)^{1/2},
    \end{equation}

    \noindent holds for any solution $(u,\phi ) $ of
    \begin{equation}
		-\Delta u+\nabla \phi=0  \quad \text{in}\quad \Omega ~~~~\text{and} \\
		~~~~\text{\rm div}  (u) = 0\quad \text{in}\quad \Omega,
	\end{equation}
     with the properties that
    $(\nabla u)^*+ (\phi)^* \in L^2(\partial\Omega)$ and
    $\frac{\partial u}{\partial\nu}=0$ on $I(x_0,3r)$. Then the $L^p$ Neumann problem for the Stokes equations
    (\ref{stokesSystem}) in $\Omega$ is uniquely solvable.
\end{thm}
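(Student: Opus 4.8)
The plan is to derive the $L^p$ solvability from the known $L^2$ solvability of \cite{FKV-1988} by means of the real-variable method of Shen \cite{Shen-2005,Shen-2006}, using the hypothesis \eqref{reverseHolder} to control the ``good'' part of a localized decomposition of the $L^2$ solution. Since $p>2$ and $\partial\Omega$ has finite measure, $L^p_0(\partial\Omega;\R^d)\subset L^2_0(\partial\Omega;\R^d)$, so for $g\in L^p_0(\partial\Omega;\R^d)$ there is a unique (up to constants for $u$) $L^2$ solution $(u,\phi)$ of \eqref{stokesSystem}, with $\|(\nabla u)^*\|_{L^2(\partial\Omega)}+\|(\phi)^*\|_{L^2(\partial\Omega)}\le C\|g\|_{L^2(\partial\Omega)}\le C_\Omega\|g\|_{L^p(\partial\Omega)}$. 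As $L^p(\partial\Omega)\hookrightarrow L^2(\partial\Omega)$, uniqueness in $L^p$ follows from uniqueness in $L^2$; hence the theorem reduces to the a priori bound $\|(\nabla u)^*\|_{L^p(\partial\Omega)}+\|(\phi)^*\|_{L^p(\partial\Omega)}\le C\|g\|_{L^p(\partial\Omega)}$ for this $(u,\phi)$.

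Put $F=(\nabla u)^*+(\phi)^*\in L^2(\partial\Omega)$. A first elementary but crucial observation is that the definition of the nontangential Neumann trace in \eqref{stokesSystem} forces $|g|\le\sqrt d\,F$ a.e.\ on $\partial\Omega$. For $x_0\in\partial\Omega$ and $0<r<c_0 r_0$ (with $c_0$ small enough that $|\partial\Omega\setminus I(x_0,6r)|\ge\tfrac12|\partial\Omega|$), set $g_1=g\,\mathbf 1_{I(x_0,6r)}+c\,\mathbf 1_{\partial\Omega\setminus I(x_0,6r)}$, the constant vector $c$ being chosen so that $\int_{\partial\Omega}g_1=0$. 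Then $g_1\in L^2_0(\partial\Omega;\R^d)$, $g_1=g$ on $I(x_0,6r)$, and by Cauchy--Schwarz together with the lower bound on $|\partial\Omega\setminus I(x_0,6r)|$, $\|g_1\|_{L^2(\partial\Omega)}^2\le C\int_{I(x_0,6r)}|g|^2$. Let $(w,\pi)$ be the $L^2$ solution of \eqref{stokesSystem} with data $g_1$, and let $(v,\theta)=(u-w,\phi-\pi)$. Then $(v,\theta)$ solves the Stokes system in $\Omega$, $(\nabla v)^*+(\theta)^*\in L^2(\partial\Omega)$, and $\partial v/\partial\nu=g-g_1=0$ on $I(x_0,6r)$; moreover, on $I(x_0,2r)$ we have $F\le F_{x_0,r}+R_{x_0,r}$ with $F_{x_0,r}=\big((\nabla v)^*+(\theta)^*\big)\mathbf 1_{I(x_0,2r)}$ and $R_{x_0,r}=\big((\nabla w)^*+(\pi)^*\big)\mathbf 1_{I(x_0,2r)}$.

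The two estimates required by the real-variable method now follow. From the $L^2$ bound for $(w,\pi)$, the estimate on $\|g_1\|_{L^2(\partial\Omega)}$, the doubling property of surface measure, and $|g|\le\sqrt d\,F$, one obtains (after the $r$-normalizations of the averages)
\begin{equation*}
\Big(\fint_{I(x_0,2r)}|R_{x_0,r}|^2\Big)^{1/2}+\Big(\fint_{I(x_0,4r)}\big((\nabla w)^*+(\pi)^*\big)^2\Big)^{1/2}\le C\Big(\fint_{I(x_0,6r)}|F|^2\Big)^{1/2}.
\end{equation*}
Applying the hypothesis \eqref{reverseHolder} to $(v,\theta)$ at the centre $x_0$ and radius $2r$ --- legitimate since $\partial v/\partial\nu=0$ on $I(x_0,6r)=I(x_0,3\cdot 2r)$, the set on which \eqref{reverseHolder} requires the Neumann data to vanish --- and then inserting the previous display and the pointwise bound $(\nabla v)^*+(\theta)^*\le F+(\nabla w)^*+(\pi)^*$, one gets
\begin{equation*}
\Big(\fint_{I(x_0,2r)}|F_{x_0,r}|^p\Big)^{1/p}\le C\Big(\fint_{I(x_0,4r)}\big((\nabla v)^*+(\theta)^*\big)^2\Big)^{1/2}\le C\Big(\fint_{I(x_0,6r)}|F|^2\Big)^{1/2}.
\end{equation*}
Because \eqref{reverseHolder} holds with a fixed constant for every admissible pair and at every surface subball of $I(x_0,r)$, Gehring's self-improvement raises the exponent $p$ in the last inequality to some $q=p+\e_1>p$, with $\e_1>0$ depending only on $C$, $p$ and the Lipschitz character of $\Omega$. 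Feeding the decomposition $F\le F_{x_0,r}+R_{x_0,r}$ and these two estimates (the one for $F_{x_0,r}$ now at exponent $q$) into Shen's real-variable theorem yields $\big(\fint_{\partial\Omega}|F|^{p'}\big)^{1/p'}\le C\big(\fint_{\partial\Omega}|F|^2\big)^{1/2}$ for every $2<p'<q$, in particular for $p'=p$; combined with the first paragraph this gives the desired a priori bound.

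The main obstacle is the localization step: one must replace the data $g$ by $g_1$ so that the correction $v=u-w$ has genuinely vanishing Neumann data near $x_0$, making \eqref{reverseHolder} applicable to $(v,\theta)$, while the error $w$ stays controlled --- on a slightly dilated surface ball --- by $F$ itself. This is exactly where the pointwise bound $|g|\le\sqrt d\,F$ is indispensable. The remaining ingredients --- the $L^2$ theory of \cite{FKV-1988}, the Gehring self-improvement of the weak reverse H\"older inequality, and Shen's real-variable lemma --- are by now standard.
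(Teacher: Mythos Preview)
Your overall strategy is exactly the paper's: start from the $L^2$ solution, localize the Neumann data to produce a splitting into a piece with vanishing conormal data near $x_0$ (to which the hypothesis \eqref{reverseHolder} applies) and a piece controlled by the $L^2$ theory of \cite{FKV-1988}, then invoke Shen's real-variable lemma and use self-improvement to reach the endpoint exponent. Your localization via a sharp cutoff plus an additive constant on the complement is a harmless variant of the paper's smooth cutoff, and your swapped labeling of the two pieces relative to the paper's $F_Q,R_Q$ is only notational.

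There is, however, a genuine gap in the way you feed the estimates into the real-variable theorem. In Shen's lemma the decomposition $|F|\le |F_Q|+|R_Q|$ must have the \emph{bad} part controlled in $L^2$ by an \emph{auxiliary} function $f$, namely $(\fint_{2Q}|F_Q|^2)^{1/2}\le C_2\sup_{Q'\supset Q}(\fint_{Q'}|f|^2)^{1/2}$, and the conclusion then carries $\|f\|_{L^q}$ on the right. You instead invoke the pointwise bound $|g|\le\sqrt d\,F$ to convert the natural estimate $(\fint_{I(x_0,2r)}|R_{x_0,r}|^2)^{1/2}\le C(\fint_{I(x_0,6r)}|g|^2)^{1/2}$ into one with $F$ on the right, and then assert that the lemma yields $(\fint_{\partial\Omega}|F|^{p'})^{1/p'}\le C(\fint_{\partial\Omega}|F|^2)^{1/2}$. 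It does not: once both pieces are controlled merely by the average of $F$ on a dilate, the hypotheses become vacuous (one may always take the bad part equal to $F$ and the good part equal to $0$), and no higher integrability of $F$ can follow. The observation $|g|\le\sqrt d\,F$ is correct but works against you here.

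The repair is immediate and recovers exactly the paper's argument: do not replace $|g|$ by $F$; take $f=|g|$ in Shen's lemma and keep the bound $(\fint_{I(x_0,2r)}|R_{x_0,r}|^2)^{1/2}\le C(\fint_{I(x_0,6r)}|g|^2)^{1/2}$. The lemma then gives
\[
\|F\|_{L^q(\partial\Omega)}\le C\big(\|F\|_{L^2(\partial\Omega)}+\|g\|_{L^q(\partial\Omega)}\big)\qquad(2<q<p+\e_1),
\]
and combining this with $\|F\|_{L^2(\partial\Omega)}\le C\|g\|_{L^2(\partial\Omega)}\le C\|g\|_{L^q(\partial\Omega)}$ and choosing $q=p$ yields $\|(\nabla u)^*\|_{L^p}+\|(\phi)^*\|_{L^p}\le C\|g\|_{L^p}$, which is the desired a priori bound.
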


The proof of Theorem \ref{SufficientCondition} relies on the following real variable argument proved by
Shen in \cite{Shen-2007}.

\begin{thm}\label{realVariableArgument}
    Let $Q_0$ be a surface cube on $\partial\Omega$ and $F\in L^2(2Q_0)$.  Let
    $p>2$ and $f\in L^q(2Q_0)$ for some $2<q<p$.  Suppose that for
    each dyadic subcube $Q$ of $Q_0$ with $|Q|\leq \beta |Q_0|$,
    there exists two integrable functions $F_Q$ and $R_Q$ on $2Q$
    such that
    \begin{eqnarray*}
        |F| &\leq & |F_Q|+|R_Q|~~ on ~~2Q, \\
        \left( \fint_{2Q} |R_Q|^p\,d\sigma
        \right)^{1/p} &\leq & C_1\left\{\left( \fint_{\alpha
        Q} |F|^2\,d\sigma\right)^{1/2} + \sup_{Q'\supset Q} \left(
        \fint_{Q'} |f|^2\,d\sigma\right)^{1/2}\right\}, \\
        \left(\fint_{2Q} |F_Q|^2\,d\sigma\right)^{1/2}
        &\leq &
        C_2\sup_{Q'\supset Q} \left(\fint_{Q'}
        |f|^2\,d\sigma\right)^{1/2},
    \end{eqnarray*}

    \noindent where $C_1,C_2>0$ and $0<\beta<1<\alpha$.  Then,

    \begin{eqnarray}
        \left( \fint_{Q_0} |F|^q\,d\sigma\right)^{1/q}
        \leq  C\left\{\left(\fint_{2Q_0} |F|^2\,d\sigma\right)^{1/2} +
        \left(\fint_{2Q_0}
        |f|^q\,d\sigma\right)^{1/q} \right\}, \label{shenThmConclusion}
    \end{eqnarray}

    \noindent where $C$ depends only on $p$, $q$, $C_1$,
    $C_2$, $\alpha$, $\beta$, and $d$.
\end{thm}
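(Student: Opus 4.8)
The plan is to prove Theorem~\ref{realVariableArgument} by a Calder\'on--Zygmund decomposition combined with a good-$\lambda$ iteration, carried out on $\partial\Omega$ regarded as a space of homogeneous type equipped with a fixed dyadic cube structure. Let $\mathcal{M}$ denote the Hardy--Littlewood maximal operator relative to surface cubes contained in $2Q_0$. Since $|F(x)|^2\le\mathcal{M}(|F|^2\chi_{2Q_0})(x)$ for a.e.\ $x\in Q_0$ by Lebesgue differentiation, it suffices to bound $\int_{Q_0}\big(\mathcal{M}(|F|^2\chi_{2Q_0})\big)^{q/2}$; writing $\omega(\lambda)=\big|\{x\in Q_0:\mathcal{M}(|F|^2\chi_{2Q_0})(x)>\lambda\}\big|$, this equals $\frac{q}{2}\int_0^\infty\lambda^{q/2-1}\omega(\lambda)\,d\lambda$. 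I fix $\lambda_0=C_d\beta^{-1}\fint_{2Q_0}|F|^2$, chosen (via the weak-$(1,1)$ bound for $\mathcal{M}$) so that for every $\lambda\ge\lambda_0$ each maximal dyadic cube $Q\subset Q_0$ with $\fint_Q|F|^2>\lambda$ automatically satisfies $|Q|\le\beta|Q_0|$, which is precisely the condition needed to apply the hypothesis of the theorem on $Q$. The contribution of $\{\lambda<\lambda_0\}$ is at most $\lambda_0^{q/2}|Q_0|$, which after normalizing and taking $q$-th roots produces the term $C(\fint_{2Q_0}|F|^2)^{1/2}$, with $C$ depending on $\beta,d,q$.

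For $\lambda\ge\lambda_0$ I decompose $\{x\in Q_0:\mathcal{M}(|F|^2\chi_{2Q_0})(x)>\lambda\}$ into the disjoint maximal dyadic cubes $\{Q_j\}$, so that $\lambda<\fint_{Q_j}|F|^2\le C_d\lambda$ and also $\fint_{\alpha Q_j}|F|^2\le C_{\alpha,d}\lambda$, the latter because a bounded-order ancestor of $Q_j$ contains $\alpha Q_j$ and has $|F|^2$-average at most $\lambda$ by the stopping-time property --- this is the point at which the $\alpha$-dilation in the hypothesis is used. On each $Q_j$ I write $F=F_{Q_j}+R_{Q_j}$ on $2Q_j$ as in the statement; the three assumptions then give $\fint_{2Q_j}|F_{Q_j}|^2\le C_2^2\,g_j$ and $\fint_{2Q_j}|R_{Q_j}|^p\le C(\lambda^{p/2}+g_j^{p/2})$, where $g_j:=\sup_{Q'\supset Q_j}\fint_{Q'}|f|^2$ satisfies $g_j\le\mathcal{M}(|f|^2\chi_{2Q_0})(x)$ for every $x\in Q_j$. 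Using the localization $\mathcal{M}(|F|^2\chi_{2Q_0})(x)\le\max\{\mathcal{M}(|F|^2\chi_{2Q_j})(x),\,\lambda\}$ for $x\in Q_j$ (again a consequence of the stopping time), together with $|F|^2\le 2|F_{Q_j}|^2+2|R_{Q_j}|^2$ on $2Q_j$, the weak-$(1,1)$ bound for $\mathcal{M}$ applied to $|F_{Q_j}|^2$, and the $L^{p/2}$ bound for $\mathcal{M}$ applied to $|R_{Q_j}|^2$ --- the hypothesis $p>2$ being essential here --- I obtain, for a large constant $A=A(d)$,
\[
\big|\{x\in Q_j:\mathcal{M}(|F|^2\chi_{2Q_0})(x)>A\lambda\}\big|\;\le\;C|Q_j|\Big(\frac{g_j}{A\lambda}+\frac{1}{A^{p/2}}+\frac{g_j^{p/2}}{(A\lambda)^{p/2}}\Big).
\]
I then split the cubes $Q_j$ according to whether $g_j\le\eta\lambda$ or $g_j>\eta\lambda$, for a small parameter $\eta>0$ to be chosen: for the first kind I use the displayed bound, while for the second kind I note that $Q_j\subset\{x\in Q_0:\mathcal{M}(|f|^2\chi_{2Q_0})(x)>\eta\lambda\}$ and simply estimate the level set by $|Q_j|$. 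Summing over $j$ and using $\sum_j|Q_j|\le C\omega(\lambda)$ yields the good-$\lambda$ inequality
\[
\omega(A\lambda)\;\le\;C\big(\eta A^{-1}+A^{-p/2}\big)\,\omega(\lambda)+\big|\{x\in Q_0:\mathcal{M}(|f|^2\chi_{2Q_0})(x)>\eta\lambda\}\big|\qquad(\lambda\ge\lambda_0).
\]

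To conclude I multiply by $\lambda^{q/2-1}$ and integrate over $(\lambda_0,\infty)$. After a change of variables the first term becomes a multiple of $\int_{\lambda_0}^\infty\lambda^{q/2-1}\omega(\lambda)\,d\lambda$ with coefficient $C(\eta A^{q/2-1}+A^{q/2-p/2})$; since $2<q<p$, I first choose $A$ large so that $CA^{q/2-p/2}\le\frac{1}{4}$ and then $\eta$ small so that $C\eta A^{q/2-1}\le\frac{1}{4}$, so that this term is absorbed into the left-hand side. (The a priori finiteness required for the absorption is obtained in the usual way, by first replacing $\omega$ by its truncation at an arbitrary level $N$, running the argument with bounds independent of $N$, and letting $N\to\infty$.) The second term integrates, by the $L^{q/2}$-boundedness of $\mathcal{M}$ (valid since $q>2$), to a multiple of $\eta^{-q/2}\int_{2Q_0}|f|^q$. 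Collecting the estimates gives $\int_{Q_0}\big(\mathcal{M}(|F|^2\chi_{2Q_0})\big)^{q/2}\le C\big(\lambda_0^{q/2}|Q_0|+\int_{2Q_0}|f|^q\big)$, and inserting the value of $\lambda_0$ and taking $q$-th roots yields \eqref{shenThmConclusion} with $C$ depending only on $p,q,C_1,C_2,\alpha,\beta$ and $d$.

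The step I expect to be the main obstacle is the good-$\lambda$ inequality itself --- in particular, making the localization of $\mathcal{M}$ onto the stopping cubes $Q_j$ precise on the Lipschitz surface $\partial\Omega$, and arranging the split of the inhomogeneous term so that $f$ enters the right-hand side of \eqref{shenThmConclusion} through its $L^q$ norm rather than an $L^2$ norm (this forces the second-kind/first-kind dichotomy above). Neither point is deep, but both require care with the dyadic structure on the space of homogeneous type; the truncation needed to justify the absorption is a routine but genuine technicality.
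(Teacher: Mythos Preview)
The paper does not give its own proof of this theorem: it is stated as a quotation of the real-variable argument from \cite{Shen-2007} and used as a black box. Your proposal supplies the standard Calder\'on--Zygmund stopping-time plus good-$\lambda$ argument, which is precisely the method by which the cited result is established, and the sketch is correct in its essentials --- the choice of $\lambda_0$ to force $|Q_j|\le\beta|Q_0|$, the control of $\fint_{\alpha Q_j}|F|^2$ via a bounded-order ancestor, the split into the cases $g_j\le\eta\lambda$ and $g_j>\eta\lambda$, and the absorption using $q<p$ are all carried out as they should be. So there is nothing to compare: your argument is the expected one, and the paper simply omits it by reference.
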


\begin{proof}[Proof of Theorem \ref{SufficientCondition}\mbox{}]



        Let $p>2$.
        For $g\in L_0^p(\partial\Omega; \R^d)$, let $(u,\phi)$ be the solution
        of the $L^2$ Neumann problem with data $g$.
        That is, $(u, \phi)$ satisfies the Stokes equations in $\Omega$,
        $\frac{\partial u}{\partial \nu} =g$ on $\partial\Omega$, and
        \begin{equation}\label{est-2}
        \| (\nabla u)^* \|_{L^2(\partial\Omega)}
        + \| (\phi)^* \|_{L^2(\partial\Omega)}
        \le C \| g\|_{L^2(\partial\Omega)}.
        \end{equation}
        We will use Theorem \ref{realVariableArgument} to  show that
        \begin{equation}\label{est-q}
        \| (\nabla u)^* \|_{L^q(\partial\Omega)}
        + \| (\phi)^* \|_{L^q(\partial\Omega)}
        \le C \| g\|_{L^q(\partial\Omega)}
        \end{equation}
        for $2<q\le p$.

      To this end,  we fix $x_0\in \partial\Omega$ and $0< r< r_0$.
      Choose a cut-off function $\varphi \in C_{0}^{\infty}(\R^d)$ such that $0\le \varphi \le 1$,
     $$
     \varphi= \begin{cases}1, & \text { in } B(x_0, 8r)  ,\\
      0, & \text { outside of  } B(x_0, 10r) .\end{cases}
     $$
     Decompose
        $u=u_1+u_2, \phi=\phi_1+\phi_2$,  where $(u_\ell ,\phi_\ell )$ is the solution of the $L^2$ Neumann problem,
\begin{equation}\label{u1}
    \left\{
    \aligned
        -\Delta u_\ell  + \nabla \phi_\ell & =0  & \quad & \mbox{ in } \Omega, \\
        \diverg{(u_\ell) }  & = 0  & \quad &  \mbox{ in }\Omega, \\
        \frac{\partial u_\ell}{\partial \nu} & =g_\ell  & \quad & \mbox{ on }
        \partial\Omega,
    \endaligned
    \right.
    \end{equation}
for $\ell =1, 2$, and
\begin{equation}\label{u2}
    \left\{
    \aligned
 g_1  & = (g +\alpha) \varphi -\alpha   \\
 g_2  & = (g+\alpha) (1-\varphi).
 \endaligned
 \right.
     \end{equation}
In \eqref{u2}, $\alpha\in \R^d$ is a constant so that $\int_{\partial\Omega} g_1=0$, i.e.,
$$
\alpha = \int_{\partial\Omega} g \varphi \Big/ \int_{\partial\Omega} (1-\varphi).
$$

Next, for $Q=I(x_0, r)$, let
        \begin{eqnarray*}
            F = (\nabla u)^* + (\phi)^*,~~
            f = |g|, ~~
            F_Q = (\nabla u_1)^*+ (\phi_1)^*, ~~
            R_Q = (\nabla u_2)^* + (\phi_2)^*.
        \end{eqnarray*}
By the $L^2$ estimates for the Neumann problem(see \cite{FKV-1988}),  we obtain
        {\allowdisplaybreaks
        \begin{eqnarray*}
            \fint_{2Q} |F_Q|^2  &\leq&
            \frac{1}{|2Q|} \int_{\partial\Omega}
            |F_Q|^2  \\
            &\leq& \frac{C}{|Q|} \int_{\partial\Omega}
            |g_1|^2 \\
            &\leq&  C \fint_{10Q }
            |g|^2
            \leq C\sup_{Q'\supset Q} \frac{1}{|Q'|} \int_{Q'}
            |f|^2,
        \end{eqnarray*}
        }
        where we have used the observation
        $$
        |\alpha | \le C \int_{10 Q} |g|.
        $$

        Note that
        $\frac{\partial u_2}{\partial\nu}=0$ on $I(x_0, 8r) $.  As a result, by applying
        the weak reverse H\"{o}lder inequality
        (\ref{reverseHolder}),  we obtain
        \begin{equation} \label{RQEst}
        \aligned
            \left( \fint_{2Q}
            |R_Q|^{p}\right)^{1/p}
            &\leq C\left(\fint_{4Q}
            |R_Q |^2 \right)^{1/2} \\
            &\leq C \left( \fint_{4Q}
            |F|^2 \right)^{1/2} +
            C\left(\fint_{4Q}
            |F_Q|^2  \right)^{1/2}\\
            &\leq C\left(\fint_{4Q} |F|^2\right)^{1/2}  +
            C\sup_{Q'\supset Q} \left(\fint_{Q'}
            |f|^2 \right)^{1/2}.
        \endaligned
        \end{equation}
Thus, the conditions of Theorem \ref{realVariableArgument} are
        satisfied. Hence,  for $2<q< p$ and $y_0 \in \partial\Omega$,
        \begin{eqnarray}
            \left(\fint_{I(y_0,cr_0)}
            |(\nabla u)^* + (\phi )^*|^q \right)^{1/q} &\leq&
            C\left(\fint_{I(y_0,r_0)}
            |(\nabla u)^* +(\phi)^* |^2 \right)^{1/2} \nonumber \\ && \qquad+
            C\left(\fint_{I(y_0, r_0) } |g|^q \right)^{1/q}.
            \label{ballReverse}
        \end{eqnarray}
        Finally, we cover $\partial\Omega$ with a finite number of
        balls of radius $cr_0$. It follows  that
           $$
           \aligned
           \| (\nabla u)^* \|_{L^q(\partial\Omega)} + \| (\phi)^* \|_{L^q(\partial\Omega)}
           &  \le C \left\{
             \| (\nabla u)^* \|_{L^2(\partial\Omega)} + \| (\phi)^* \|_{L^2(\partial\Omega)}
             + \| g \|_{L^q(\partial\Omega)} \right\}\\
             &\le C \| g \|_{L^q(\partial\Omega)},
             \endaligned
             $$
             where we have used the $L^2$ estimate \eqref{est-2} as well as H\"older's inequality.
             Note that  the reverse H\"older condition \eqref{reverseHolder} is self-improving;
             the condition \eqref{reverseHolder} for $p$ implies \eqref{reverseHolder} for some $\bar{p}>p$.
             As a result,  the $L^q$ estimate \eqref{est-q} holds for $2< q< \bar{p}$.
             In particular, it holds for $q=p$.
           \end{proof}

\begin{lemma}\label{InteriorEstimates}
    Let $(u, \phi) $ be a solution of the Stokes equations,
    $-\Delta u +\nabla \phi =0$ and $\text{\rm div}(u)=0$ in $B(x_0, 2r)$.  Then
    $$
   r \left(  |\nabla^2  u(x_0)| + |\nabla \phi (x_0)|  \right)
   + \left( |\nabla u (x_0)| +|\phi (x_0)|\right) \leq
    C \fint_{B(x_0,r)} \left( |\nabla u| + |\phi|\right) ,
    $$
    where $C$ depends only on $d$.
\end{lemma}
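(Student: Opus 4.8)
The plan is to reduce the lemma to the classical interior estimates for harmonic and biharmonic functions. The starting point is the observation that, for a solution of the homogeneous Stokes system, the pressure is harmonic and the velocity is componentwise biharmonic: taking the divergence of $-\Delta u+\nabla\phi=0$ and using $\operatorname{div}(u)=0$ gives $\Delta\phi=0$ in $B(x_0,2r)$, and then applying $\Delta$ to $\Delta u=\nabla\phi$ yields $\Delta^2 u=\nabla(\Delta\phi)=0$. In particular $\nabla\phi$ is harmonic and $\nabla u$ is biharmonic in $B(x_0,2r)$; moreover, by interior regularity for the Stokes system, $(u,\phi)$ is smooth (indeed real-analytic) in $B(x_0,2r)$, so every quantity on the left-hand side is well defined.

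Next I would invoke the two scalar interior estimates. For $\psi$ harmonic in $B(x_0,2r)$, the mean value property gives $|\psi(x_0)|\le\fint_{B(x_0,r)}|\psi|$, and since $\nabla\psi$ is also harmonic one has the derivative bound $r|\nabla\psi(x_0)|\le C\fint_{B(x_0,r)}|\psi|$, with $C=C(d)$ (write $\partial_j\psi(x_0)=|B(x_0,s)|^{-1}\int_{\partial B(x_0,s)}\psi\,n_j\,d\sigma$ and average in $s\in(r/2,r)$ to convert the boundary integral into a solid one over $B(x_0,r)$). For $w$ biharmonic in $B(x_0,2r)$ the analogous estimate $|w(x_0)|+r|\nabla w(x_0)|\le C\fint_{B(x_0,r)}|w|$ holds; this follows from the harmonic estimates together with Almansi's representation $w=w_0+|x-x_0|^2 w_1$ with $w_0,w_1$ harmonic, or directly from the real-analyticity of biharmonic functions. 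Applying the first estimate to $\psi=\phi$ and the second to $w=\nabla u$ and adding the resulting inequalities gives
\begin{equation*}
r\big(|\nabla^2 u(x_0)|+|\nabla\phi(x_0)|\big)+\big(|\nabla u(x_0)|+|\phi(x_0)|\big)\le C\fint_{B(x_0,r)}\big(|\nabla u|+|\phi|\big),
\end{equation*}
which is the claim; the constant depends only on $d$ because the Stokes operator has constant coefficients.

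I do not expect a genuine obstacle here: this is a routine interior estimate. The only points deserving a word of care are (i) recognizing the harmonicity of $\phi$ and the biharmonicity of $u$, so that the scalar theory applies directly; (ii) phrasing the velocity bound through $\nabla u$ rather than $u$ — this is forced because the Stokes system determines $u$ only up to an additive constant, and it is handled automatically by applying the biharmonic estimate to $\nabla u$ (alternatively, one applies the biharmonic derivative estimates to $u-\fint_{B(x_0,r)}u$ and invokes Poincaré's inequality); and (iii) the routine bookkeeping with radii needed to keep all averages over $B(x_0,r)$ rather than a slightly larger concentric ball, achieved by integrating the relevant mean value identities over a range of radii in $(r/2,r)$.
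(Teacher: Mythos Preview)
Your proposal is correct; the reduction to harmonic estimates for $\phi$ and biharmonic estimates for $\nabla u$ is a standard route to these interior bounds. The paper itself simply cites the result as well known and gives no further argument, so your write-up is more detailed than the paper's but entirely in the same spirit.
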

\begin{proof}
This follows from  the well-known interior estimates for the Stokes equations.
\end{proof}

Recall that the square function $S(w)$ is defined as
\begin{equation}
    S(w)(z) = \left(\int_{\Gamma(z)} \frac{|\nabla
    w(x)|^2}{|x-z|^{d-2}}\,dx\right)^{1/2}
\end{equation}
for $z\in \partial\Omega$, where $\Gamma(z)=\{ x\in \Omega: |x-z|< C_0 \text{\rm dist} (x, \partial\Omega) \}$.

\begin{lemma}\label{squarefunction}
Let $\Omega$ be a bounded Lipschitz domain.
Assume that  $-\Delta u + \nabla \phi=0 $ and $\text{\rm div}(u)=0$ in $\Omega$.
Then, for $0< p< \infty$,
\begin{equation}
      \left\| ( \nabla u)^* +(\phi)^* \right\|_{L^p(\partial\Omega)}
    \leq C \left\|
    S(\nabla u) + S(\phi) \right\|_{L^p(\partial\Omega)} +
    C\left\{ |\nabla u(y_0)| + |\phi (y_0) | \right\}  |\partial\Omega|^{1/p}, \label{sqFunct2}
\end{equation}
where $y_0 \in \Omega$, $\delta (y_0) \ge c_0 \, \text{\rm diam}(\Omega)$, and
 $C$ depends on $p$ and
the Lipschitz character of $\Omega$.
\end{lemma}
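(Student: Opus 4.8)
The plan is to follow the classical Fefferman--Stein/Dahlberg comparison of the nontangential maximal function with the square function (as in \cite{FKV-1988} and \cite{Kenig-book}), specialized to the Stokes system. The only inputs about $(u,\phi)$ beyond the Stokes equations are: the interior estimate of Lemma~\ref{InteriorEstimates}, which gives, for $g(x):=|\nabla u(x)|+|\phi(x)|$ and $V:=(\nabla u,\phi)$, the subsolution-type bound $g(x)\le C\fint_{B(x,r)}g$ whenever $B(x,2r)\subset\Omega$; the interior Caccioppoli bound $\int_{B(x,r)}(|\nabla^2u|^2+|\nabla\phi|^2)\le Cr^{-2}\int_{B(x,2r)}(|\nabla u|^2+|\phi|^2)$, obtained by differentiating the equations and reapplying Lemma~\ref{InteriorEstimates}; and the structural identities $\Delta\phi=0$ (divergence of $-\Delta u+\nabla\phi=0$ together with $\operatorname{div}(u)=0$) and $\Delta(\partial_k u_j)=\partial_k\partial_j\phi$. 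Since nontangential maximal functions with respect to cones of different apertures have comparable $L^p(\partial\Omega)$ norms, I fix the aperture $C_0$ as small as the argument requires; and since no a priori integrability is assumed, I work with the truncated maximal function (cones cut off at height $h$ above $\partial\Omega$) and let $h\to 0$ at the end by monotone convergence. Write $\mathcal S:=S(\nabla u)+S(\phi)$.

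The core is a good-$\lambda$ inequality for subsets of $\partial\Omega$: there exist $C>0$ and $\gamma_0\in(0,1)$ so that for $0<\gamma<\gamma_0$ and $\lambda>0$,
\begin{equation*}
\sigma\big(\{V^*>2\lambda,\ \mathcal S\le\gamma\lambda\}\big)\le C\gamma^2\,\sigma\big(\{V^*>\lambda\}\big)+(\text{interior term}).
\end{equation*}
Multiplying by $\lambda^{p-1}$ and integrating in $\lambda$ then yields the asserted $L^p$ bound for every $0<p<\infty$, the additive term $C(|\nabla u(y_0)|+|\phi(y_0)|)|\partial\Omega|^{1/p}$ appearing because $\mathcal S$ only sees $\nabla V$ and is blind to the ``mean'' of $V$, which we pin down at the interior point $y_0$. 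To prove the good-$\lambda$ inequality one Whitney-decomposes $\{V^*>\lambda\}=\bigcup_j Q_j$ into surface cubes with $\ell(Q_j)\sim\mathrm{dist}(Q_j,\partial\Omega\setminus\{V^*>\lambda\})$, chooses for each $Q_j$ a boundary point $\hat z_j$ at distance $\sim\ell(Q_j)$ with $V^*(\hat z_j)\le\lambda$ and a corkscrew point $A_j$ over $Q_j$ at height $\sim\ell(Q_j)$; iterating Lemma~\ref{InteriorEstimates} along a chain of balls from $A_j$ into $\Gamma(\hat z_j)$ gives $|V(A_j)|\le C\lambda$. It then suffices to prove, on each $Q_j$, the local $L^2$ estimate
\begin{equation*}
\int_{Q_j}\big(V^*_{\ell(Q_j)}-|V(A_j)|\big)_+^2\,d\sigma\le C\int_{2Q_j}\mathcal S_{\ell(Q_j)}^2\,d\sigma,
\end{equation*}
with $V^*_t,\mathcal S_t$ truncated at height $t$; Chebyshev then converts this into the required measure bound with the factor $\gamma^2$.

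For the local $L^2$ estimate, Fubini converts $\int_{2Q_j}\mathcal S_{\ell(Q_j)}^2\,d\sigma$ into the weighted interior integral $\int_{R_j}|\nabla V|^2\,\delta(x)\,dx$ over the sawtooth region $R_j$ under $2Q_j$ of height $\sim\ell(Q_j)$, while a Rellich--Ne\v{c}as integration-by-parts identity applied to $|V-V(A_j)|^2$ with a smooth vector field $\Psi$ satisfying $\Psi\cdot n\ge c_1>0$ on $\partial\Omega$ expresses $\int_{\partial R_j}|V-V(A_j)|^2\,(\Psi\cdot n)\,d\sigma$ in terms of the weighted integral of $|\nabla V|^2$, plus terms from $\operatorname{div}\Psi$ and from $\Delta|V-V(A_j)|^2=2|\nabla V|^2+2(V-V(A_j))\cdot\Delta V$; the coupling term $\Delta V$, read off from $\Delta\phi=0$ and $\Delta(\partial_k u_j)=\partial_k\partial_j\phi$, is integrated by parts once more and the resulting terms are absorbed using the interior Caccioppoli inequality and $|V(A_j)|\le C\lambda$. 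The portion of $\partial R_j$ lying on $\partial\Omega$ yields $\int_{Q_j}(V^*_{\ell(Q_j)}-|V(A_j)|)_+^2$ up to constants (again by Lemma~\ref{InteriorEstimates}); the portion interior to $\Omega$ sits at height $\sim\ell(Q_j)$ and, after summing over $j$ and over dyadic $\lambda$, is controlled by $C(|\nabla u(y_0)|+|\phi(y_0)|)^2|\partial\Omega|^{2/p}$ through a chain of interior estimates from $A_j$ to $y_0$.

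The hard part will be exactly this local $L^2$ (Carleson-measure) estimate on sawtooth domains over a Lipschitz graph: one must set up the geometry of $R_j$ and the field $\Psi$ so that every boundary piece is controlled, and must carry the lower-order coupling terms peculiar to the Stokes system — the components of $\nabla u$ are not individually harmonic — through the integration by parts; these are the only places where Lemma~\ref{InteriorEstimates} and the interior Caccioppoli inequality enter essentially. The remaining ingredients (the Whitney decomposition, the passage from the good-$\lambda$ inequality to all $0<p<\infty$, the change of aperture, and the removal of the truncation) are routine and parallel the harmonic case in \cite{FKV-1988} and \cite{Kenig-book}.
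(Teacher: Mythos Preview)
Your strategy is sound and would eventually succeed, but it is far more laborious than what the paper actually does. The paper's proof is two sentences of citation: first, since taking the divergence of $-\Delta u+\nabla\phi=0$ and using $\operatorname{div}(u)=0$ gives $\Delta\phi=0$, the pressure $\phi$ is harmonic and Dahlberg's classical theorem \cite{Dahlberg-1980} yields $\|(\phi)^*\|_{L^p(\partial\Omega)}\le C\|S(\phi)\|_{L^p(\partial\Omega)}+C|\phi(y_0)|\,|\partial\Omega|^{1/p}$ directly. Second, for each $j$ the pair $(\partial_j u,\partial_j\phi)$ is again a solution of the Stokes system, so the square-function estimate for Stokes velocities proved in \cite{BS-1995} applies to $\partial_j u$ and gives $\|(\nabla u)^*\|_{L^p(\partial\Omega)}\le C\|S(\nabla u)\|_{L^p(\partial\Omega)}+C|\nabla u(y_0)|\,|\partial\Omega|^{1/p}$. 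Adding the two finishes the proof.

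The difference is that you are treating $V=(\nabla u,\phi)$ as a single coupled object and redoing the entire good-$\lambda$ machinery for it, which forces you to confront the cross term $\Delta(\partial_k u_j)=\partial_k\partial_j\phi$ in the local $L^2$ Rellich step. The paper sidesteps this completely by observing that differentiation preserves the Stokes structure, so the coupling never has to be unwound: one simply invokes an existing black box once for the harmonic scalar $\phi$ and once for the Stokes velocity $\partial_j u$. Your route has the merit of being self-contained (no appeal to \cite{BS-1995}), and the coupling term you identify can indeed be absorbed after one more integration by parts since $\sum_k\partial_k\partial_k u_j=\partial_j\phi$ produces a favorable $-\int|\nabla\phi|^2$ term; but for the purposes of this paper the decoupling observation makes the lemma a triviality.
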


\begin{proof}
Since $\partial_j u$ is also a solution of the Stokes equations, it follows from  \cite{BS-1995} that
\begin{equation}
      \left\| ( \nabla u)^* \right\|_{L^p(\partial\Omega)}
    \leq C \left\|
    S(\nabla u) \right\|_{L^p(\partial\Omega)} +
    C |\nabla u(y_0)|    |\partial\Omega|^{1/p}. \label{sqFunct3}
\end{equation}
Also note that $\phi$ is harmonic in $\Omega$.
Thus, by \cite{Dahlberg-1980},
\begin{equation}
      \left\| (\phi)^* \right\|_{L^p(\partial\Omega)}
    \leq C \left\|
    S(\phi) \right\|_{L^p(\partial\Omega)} +
    C  |\phi (y_0)  |\partial\Omega|^{1/p}. \label{sqFunct4}
\end{equation}
The estimate \eqref{sqFunct2} follows from \eqref{sqFunct3}-\eqref{sqFunct4}.
\end{proof}

\begin{lemma} \label{importantLemma}
    Let $(u, \phi)$ be the same as in Lemma \ref{squarefunction}.
     Let $p>2$. Then for any $\gamma \in
    (0,1)$, we have
    \begin{equation}\label{sq-est}
    \aligned
       &   \int_{\partial\Omega} |(\nabla u)^* +(\phi)^* |^p\,d\sigma
        \leq C
        |\partial\Omega| \left\{ |\nabla u(y_0)|^{p}  + |\phi (y_0)|^p \right\}\\
     &    + C_{\gamma} \{\mbox{diam}(\Omega)\}^{\gamma}
        \sup_{x\in \Omega} \bigg\{ \left( |\nabla^2 u(x) | + |\nabla \phi (x) |\right)
        ^{p-2}[\delta(x)]^{p-1-\gamma}\bigg\} \int_{ \Omega}
      \left(   |\nabla^2u| +|\nabla \phi| \right) ^2 dy.
        \endaligned
    \end{equation}
\end{lemma}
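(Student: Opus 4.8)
The plan is to use Lemma~\ref{squarefunction} to reduce \eqref{sq-est} to a bound for the square function, and then to estimate $\int_{\partial\Omega}S(w)^p$ by a dyadic (Littlewood--Paley type) decomposition in scale together with a weighted H\"older inequality over the scales, which trades a small power of $\text{diam}(\Omega)$ for the loss recorded by $\gamma$. Since $\partial_j u$ solves the Stokes equations (with pressure $\partial_j\phi$) and $\phi$ is harmonic, Lemma~\ref{squarefunction} leaves us to prove, for $w$ equal to $\nabla u$ or to $\phi$ (so that $w$ is a solution of the Stokes equations), that $\int_{\partial\Omega}S(w)^p\,d\sigma\le C\{\text{diam}(\Omega)\}^{\gamma}\,\mathcal S_w\int_\Omega|\nabla w|^2$, where $\mathcal S_w:=\sup_{x\in\Omega}\{|\nabla w(x)|^{p-2}[\delta(x)]^{p-1-\gamma}\}$. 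Fix such a $w$, set $r_j=2^{-j}\text{diam}(\Omega)$ and $A_j(z)=\Gamma(z)\cap\{x:\ r_{j+1}\le|x-z|<r_j\}$. On $A_j(z)$ one has $\delta(x)\approx r_j$ (because $x\in\Gamma(z)$ forces $r_{j+1}/C_0<\delta(x)\le|x-z|<r_j$), and $|A_j(z)|\approx r_j^{\,d}$ for all but the first $O(1)$ scales, which are harmless. Hence, writing $W_j(z)^2:=\fint_{A_j(z)}|\nabla w|^2$, we obtain $S(w)(z)^2\approx\sum_{j\ge0}r_j^{\,2-d}\int_{A_j(z)}|\nabla w|^2=\sum_{j\ge0}r_j^{\,2}\,W_j(z)^2$.

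The main step is to distribute the exponent $p/2>1$ across this sum. Choosing $\e_1=2\gamma/(p-2)>0$ and weights $\lambda_j=2^{-j\e_1}$ (so $\sum_j\lambda_j<\infty$), H\"older's inequality gives
\[
S(w)(z)^p\le C\,\{\text{diam}(\Omega)\}^{\e_1(p-2)/2}\sum_{j\ge0}r_j^{\,p-\e_1(p-2)/2}\,W_j(z)^p .
\]
Splitting $W_j^p=W_j^{p-2}W_j^2$ and using $\delta\approx r_j$ on $A_j(z)$, one has $W_j(z)^{p-2}\le\sup_{A_j(z)}|\nabla w|^{p-2}\le C\,r_j^{-(p-1-\gamma)}\mathcal S_w$, so after substitution the power of $r_j$ becomes $p-\e_1(p-2)/2-(p-1-\gamma)=1+\gamma-\e_1(p-2)/2$, and it remains to integrate $\sum_j r_j^{\,1+\gamma-\e_1(p-2)/2}W_j(z)^2$ over $z$.

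For the Fubini step, $W_j(z)^2=r_j^{-d}\int_{A_j(z)}|\nabla w|^2$, and the set $\{z\in\partial\Omega:\ y\in A_j(z)\}$ has surface measure $\approx r_j^{\,d-1}$ when $\delta(y)\approx r_j$ and is empty otherwise; hence $\int_{\partial\Omega}W_j(z)^2\,d\sigma\approx r_j^{-1}\int_{\{\delta(y)\approx r_j\}}|\nabla w|^2$. Summing over $j$ (each $y$ lies in $O(1)$ shells) yields
\[
\int_{\partial\Omega}S(w)^p\,d\sigma\le C\,\{\text{diam}(\Omega)\}^{\e_1(p-2)/2}\,\mathcal S_w\int_\Omega[\delta(y)]^{\gamma-\e_1(p-2)/2}|\nabla w(y)|^2\,dy .
\]
The choice $\e_1=2\gamma/(p-2)$ makes the exponent of $\delta$ equal to $0$ and turns $\{\text{diam}(\Omega)\}^{\e_1(p-2)/2}$ into $\{\text{diam}(\Omega)\}^{\gamma}$, which is the claimed bound for $\int_{\partial\Omega}S(w)^p$. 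Applying this with $w=\nabla u$ and $w=\phi$ and feeding the result into Lemma~\ref{squarefunction} (whose remainder contributes $C|\partial\Omega|\{|\nabla u(y_0)|^p+|\phi(y_0)|^p\}$) proves \eqref{sq-est}.

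I expect the main obstacle to be the scale-by-scale bookkeeping in the H\"older step: the summable weight $\lambda_j$ must be chosen so that, after extracting the $(p-2)$-th power as the weighted supremum $\mathcal S_w$ and carrying out Fubini, the residual power of $\delta$ is nonnegative while the residual power of $\text{diam}(\Omega)$ is exactly $\gamma$. This is precisely where the hypothesis $\gamma\in(0,1)$ is consumed, and it is what forces the weight $[\delta]^{p-1-\gamma}$ (rather than the critical $[\delta]^{p-1}$) to appear. The geometric facts invoked --- $\delta(x)\approx r_j$ and $|A_j(z)|\approx r_j^{\,d}$ on the truncated cones, and $|\{z:\ y\in A_j(z)\}|\approx r_j^{\,d-1}$ exactly when $\delta(y)\approx r_j$ --- are routine for Lipschitz $\Omega$, but require the usual care with aperture constants and bounded overlap of the dyadic shells.
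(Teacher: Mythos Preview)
Your argument is correct and is essentially a dyadic discretization of the paper's proof: the paper applies H\"older's inequality directly to the integral defining $S(w)(z)$, splitting the kernel as $|x-z|^{-(d-2)}=|x-z|^{-2(d+\gamma-p)/p}\cdot|x-z|^{-((p-2)d-2\gamma)/p}$ with exponents $p$ and $p/(p-2)$, which in one line yields $\int_{\partial\Omega}|S(w)|^p\,d\sigma\le C_\gamma\{\text{diam}(\Omega)\}^\gamma\int_\Omega|\nabla w|^p[\delta]^{p-1-\gamma}$, and then extracts the supremum from $|\nabla w|^p=|\nabla w|^{p-2}|\nabla w|^2$. Your dyadic-shell H\"older with weights $\lambda_j=2^{-j\e_1}$, $\e_1=2\gamma/(p-2)$, is the discrete version of exactly this splitting (the same choice of $\e_1$ arises there because $\int_{\Gamma(z)}|x-z|^{-d+2\gamma/(p-2)}\,dx\le C_\gamma\{\text{diam}(\Omega)\}^{2\gamma/(p-2)}$), so the two proofs coincide once one unwinds the dyadic sums; the paper's continuous H\"older just avoids the shell bookkeeping.
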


\begin{proof}

We first rewrite $S( w)(z)$ as
    $$
    S(w)(z) = \left(\int_{\Gamma(z)} \frac{|\nabla
    w(x)|^2}{|x-z|^{\frac{2(d+\gamma-p)}{p}}}\cdot
    \frac{dx}{|x-z|^{\frac{(p-2)d-2\gamma}{p}}}\right)^{1/2}.
    $$
    It follows by H\"{o}lder's inequality that
    \begin{eqnarray*}
        S( w)(z) &\leq& \left(\int_{\Gamma(z)}
        \frac{|\nabla
        w(x)|^p}{|x-z|^{d+\gamma-p}}\,dx\right)^{1/p}\left(\int_{\Gamma(z)}
        \frac{dx}{|x-z|^{d-\frac{2\gamma}{p-2}}}\right)^{\frac{p-2}{2p}}
         \\
        &\leq&
        C_\gamma \{\mbox{diam}(\Omega)\}^{\gamma/p}\left(\int_{\Gamma(z)}
        \frac{|\nabla
        w(x)|^p}{|x-z|^{d+\gamma-p}}\,dx\right)^{1/p}.
    \end{eqnarray*}

    \noindent Next, by integrating $|S( w)(z)|^p$ over
    $\partial\Omega$,  we have
    \begin{equation}\label{3.8}
        \int_{\partial\Omega} |S( w)|^p\,d\sigma \leq
        C_{\gamma}\{\mbox{diam}(\Omega)\}^{\gamma } \int_{\Omega}
        |\nabla w(x)|^p[\delta(x)]^{p-1-\gamma}\,dx.
    \end{equation}
    Thus,
    \begin{equation}
    \aligned
&        \int_{\partial\Omega} |S( \nabla u) +S(\phi) |^p\,d\sigma \leq
        C_{\gamma}\{\mbox{diam}(\Omega)\}^{\gamma } \int_{\Omega}
     \left(    |\nabla^2 u(x)| + |\nabla \phi (x) | \right) ^p[\delta(x)]^{p-1-\gamma}\,dx\\
     &  \le C_{\gamma} \{\mbox{diam}(\Omega)\}^{\gamma}
        \sup_{x\in \Omega} \bigg\{ \left( |\nabla^2 u(x) | + |\nabla \phi (x) |\right)
        ^{p-2}[\delta(x)]^{p-1-\gamma}\bigg\} \int_{ \Omega}
      \left(   |\nabla^2u| +|\nabla \phi| \right) ^2 dy.
\endaligned
    \end{equation}
This, together with \eqref{sqFunct2}, gives \eqref{sq-est}.
\end{proof}

    We are ready to give the proof of Theorem \ref{NeumannProblem}.

    \begin{proof}[Proof of Theorem \ref{NeumannProblem}]

        Let $0<r< r_0$ and $x_0\in \partial\Omega$.
        Let $(u,\phi)$ be a solution of $-\Delta u + \nabla \phi=0$, $\mbox{div}(u)=0$ in $\Omega$.
         Suppose that  $\frac{\partial u}{\partial \nu}=0$ on $I(x_0, 8r)$
         and  $(\nabla u)^* + (\phi)^* \in L^2(\partial\Omega)$.
          By Theorem \ref{SufficientCondition}, it suffices to show that for $p$ satisfying \eqref{p},
        \begin{equation}\label{WRH}
        \left(\fint_{I(x_0, r) }|(\nabla u)^*+ (\phi)^* |^p\right)^{1/p}
        \leq C
        \left(\fint_{I(x_0, 2r) }|(\nabla u)^* +(\phi)^* |^2\right)^{1/2}.
         \end{equation}

To this end, let
	\begin{equation*}
		\mathcal{M}_r( w)(z)=\sup\{| w(x)|: x\in\Gamma(z)\quad \text{and}\quad \delta(x)<  c_0 r\},
	\end{equation*}
and
	\begin{equation*}
		\mathcal{M}^r( w)(z)=\sup\{| w(x)|: x\in\Gamma(z)\quad \text{and}\quad \delta(x)\geq  c_0 r\},
	\end{equation*}
	for $z\in \partial\Omega$, where $\delta (x) =\text{\rm dist}(x, \partial\Omega)$.
	Using the interior estimates for $(u, \phi)$, it is not difficult to see that
	
	 \begin{equation}\label{WRH-1}
        \left(\fint_{I(x_0, r) }| \mathcal{M}^r (|\nabla u| +|\phi| )  |^p\right)^{1/p}
        \leq C
        \left(\fint_{I(x_0, 2r) }|(\nabla u)^* +(\phi)^* |^2\right)^{1/2}.
        \end{equation}
To estimate $\mathcal{M}_r(\nabla u)$, notice that
        \begin{equation}\label{local-1}
            \int_{I(x_0, r) } |\mathcal{M}_r(|\nabla u| +|\phi| )|^p\,d\sigma \leq \int_{\partial D(x_0, 2r) } |(\nabla u)^* +(\phi)^* |^{p}\,d\sigma,
        \end{equation}
where the nontangential maximal functions in the right-hand side of \eqref{local-1} are taken with respect to the domain $D(x_0, 2r)$.
Using Lemma \ref{importantLemma}, we obtain
 \begin{equation}\label{local-2}
 \aligned
          &   \int_{I(x_0, r) } |\mathcal{M}_r(| \nabla u|+ |\phi| )|^p\,d\sigma
           \leq  C r^{d-1} \left(  |\nabla u (y_0) | + |\phi (y_0)| \right) ^{p} \\
        & + C_{\gamma} r^{\gamma}
        \sup_{x\in D(x_0, 2r) }\left(  |\nabla^2
        u| + |\nabla \phi| \right) ^{p-2}[\delta(x)]^{p-1-\gamma} \int_{D(x_0, 2r) }
       \left(  |\nabla^2u  | +|\nabla \phi  |\right)^2 dy,
       \endaligned
        \end{equation}
        \noindent where $y_0\in D(x_0, r)$ is chosen so that $\delta(y_0)\approx r$.
        By the interior estimates,
        \begin{equation}\label{4.1}
        \aligned
        |\nabla u(y_0) | +|\phi (y_0)|
       &  \le
       \left(\fint_{B(y_0,c_0 r)} ( |\nabla u| +|\phi|) ^2\right)^{1/2}\\
       & \leq
            C \left(\fint_{I (x_0, 3r)}  |(\nabla u)^* + (\phi)^* |^2\right)^{1/2}.
        \endaligned
        \end{equation}
        It follows from \eqref{WRH-1}, \eqref{local-2} and  \eqref{4.1} that
        \begin{equation}\label{4.2}
        \aligned
&            \fint_{I(x_0, r)} |(\nabla u)^* + (\phi)^* |^p \leq C\bigg(\fint_{I(x_0, 3r)} |(\nabla u)^*+(\phi)^* |^2\bigg)^{p/2}\\
   &
           +  C r^{\gamma +1}\sup_{x\in D(x_0, 2r) }  (  |\nabla^2  u | + |\nabla \phi | )^{p-2}[\delta(x)]^{p-1-\gamma} \fint_{D(x_0, 2r)}
       (  |\nabla ^2 u| + |\nabla \phi|) ^2.
        \endaligned
        \end{equation}

        To handle the second term in the right-hand side of \eqref{4.2}, we utilize  the assumptions that $\Omega$ is convex and  $\frac{\partial u}{\partial \nu}=0$ on
        $I(x_0, 8r)$, which allow us to apply Theorem \ref{W22} and Corollary \ref{new-cor}.
         For any $x\in D(x_0, 2r)$,
        \begin{equation}\label{4.3}
        \aligned
           |\nabla^2 u(x)| +|\nabla \phi (x)|  & \leq \frac{C}{\delta(x)} \left(\fint_{B(x, \delta(x)/2) } (  |\nabla u| +|\phi|) ^{\bar{p}}  \right)^{1/\bar{p}}\\
 &\leq \frac{Cr^{\frac{d}{\bar{p}}  } } {[\delta(x)]^{1+ \frac{d}{\bar{p}}}}
 \left(\fint_{D(x_0, 3r) } ( |\nabla u| +|\phi |) ^{\bar{p}} \right)^{1/\bar{p}}\\
 &\leq \frac{Cr^{\frac{d}{\bar{p}}  } } {[\delta(x)]^{1+ \frac{d}{\bar{p}}}}
 \left(\fint_{D(x_0, 6r) } ( |\nabla u| +|\phi |) ^{2} \right)^{1/2},
    \endaligned
        \end{equation}
        where $\bar{p}>2$ for $d=2$, and $p_d< \bar{p}< p_d +\e$ for $d\ge 3$.
        Plugging  \eqref{4.3} into \eqref{4.2} and using Theorem \ref{W22},  we obtain
        \begin{equation}\label{4.4}
        \aligned
&           \fint_{I(x_0, r)} |(\nabla u)^* + (\phi)^* |^p \leq C\bigg(\fint_{I(x_0, 3r)} |(\nabla u)^*+(\phi)^* |^2\bigg)^{p/2}\\
 &\qquad\qquad  +     C r^{\frac{d}{\bar{p}} (p-2)  +\gamma-1} \sup_{x\in D(x_0, 2r) } [\delta(x)]^{(1+\frac{d}{\bar{p}}) (2-p) +p -1-\gamma} \left(\fint_{D(x_0, 6r) }
       ( |\nabla u| +|\phi| )^2\right)^{p/2}.
        \endaligned
        \end{equation}
Note that if
\begin{equation}\label{new-3}
{(1+\frac{d}{\bar{p}}) (2-p) +p -1}
=1+ \frac{d}{\bar{p}} (2-p) >  0,
\end{equation}
we may choose $\gamma>0$ so small that $ 1+ \frac{d}{\bar{p}} (2-p)-\gamma >0$. It follows that
        $$
        \aligned
           \fint_{I(x_0, r)} |(\nabla u)^* +(\phi)^* |^p &\leq
            C \bigg(\fint_{I(x_0, 3r)}  |(\nabla u)^* +(\phi)^* |^2\bigg)^{p/2}+C\bigg(\fint_{I(x_0, 6r)} |(\nabla u)^* +(\phi)^* |^2\bigg)^{p/2}\\\nonumber
            &\leq C\bigg(\fint_{I(x_0, 6r)} |(\nabla u)^*+(\phi)^* |^2\bigg)^{p/2}.
        \endaligned
        $$
 By covering $I(x_0, r)$ with a finite number of $I(z_j, r/4)$ with $z_j \in I(x_0, r)$, we obtain \eqref{WRH}.

 Finally, observe that  \eqref{new-3} is equivalent to $p< 2+ \frac{\bar{p}}{d}$.
 Since $\bar{p}>\frac{2d}{d-2}$ for $d\ge 3$ and $\bar{p}>2$ is arbitrary for $d=2$,
 we conclude that \eqref{WRH}  holds for $2<p< \frac{2d}{d-2}+\e$  if $d\ge 3$, and for any $2<p< \infty$ if $d=2$.
This completes the proof.
   \end{proof}

%

          \begin{remark}
          The proof of Theorem  \ref{NeumannProblem} uses  the localized $W^{2, 2}$ estimate in
          Theorem \ref{W22}, However, in Theorem \ref{W22}, we assume that $\Omega$ is a smooth convex domain and $u$  the
          smooth solution. To resolve this issue, we use an approximate argument. More precisely,
           we first approximate a general convex domain $\Omega$ by  a sequence of smooth convex domains $\{\Omega_\ell \}$
          with uniform Lipschitz characters. On each $\Omega_\ell$, we use Theorem  \ref{W22} to establish the estimate \eqref{1.2}
          for  a smooth data $g$ with bounding constant $C$  independent of $\ell$.
          Now,  for $g\in C^\infty(\mathbb{R}^d; \mathbb{R}^d)$ with $\int_{\partial\Omega} g=0$,
          let $(u^\ell, \phi^\ell)$ be the smooth solution of \eqref{stokesSystem} in $\Omega_\ell$ with Neumann data $g_\ell = g|_{\partial\Omega_\ell} - \fint_{\partial\Omega_\ell} g$.
          Let $(u, \phi)$ be the solution of \eqref{stokesSystem} in $\Omega$ with data $g|_{\partial\Omega} $ and $p=2$.
          Using the estimate \eqref{1.2}  for  $u-u_\ell$ in $\Omega_\ell$ with   $ p=2$,
           one may show that $\nabla u_\ell  \to \nabla u$ and $\phi_\ell \to  \phi $ uniformly on any compact subset of $\Omega$. This, together with the $L^p$
           estimate  for $(u_\ell, \phi_\ell)$ in
           $\Omega_\ell$, yields the $L^p$ estimate \eqref{1.2} for $(u, \phi)$ in $\Omega$. Finally, a density argument is used to establish \eqref{1.2} for a general  $g$  in $L^p_0(\partial\Omega; \mathbb{R}^d)$. We point out that a similar approximation argument is needed  in the proof of Theorem \ref{W1p}, given in the next section. We leave the details to the reader.
          \end{remark}



\section{\bf \texorpdfstring{$W^{1,p}$}{W1p} estimates}

Let $f \in L^2(\Omega; \mathbb{R}^{d\times d})$, $F\in L^2(\Omega; \mathbb{R}^d)$, and $g\in B^{-1/2, 2}(\partial\Omega; \mathbb{R}^d)$, where $\Omega$ is a bounded
Lipschitz domain in $\mathbb{R}^d$.
Suppose $(f, F, g)$ satisfies the compatibility condition \eqref{comp} with $p=2$.
To establish the existence and uniqueness of weak solutions of \eqref{inhomogeneousStokes} for $p=2$, consider the bilinear form
$$
B[u, v]=\int_\Omega \nabla u \cdot \nabla v
$$
on the Hilbert space,
$$
V= \Big\{ u \in W^{1, 2}(\Omega; \mathbb{R}^d): \ \text{\rm div}(u) =0 \text{ in } \Omega \text{ and } \int_\Omega u=0 \Big\}.
$$
By the Lax-Milgram theorem,  it follows that there exists a unique $u\in V$ such that
\begin{equation}\label{weak-11}
\int_\Omega \nabla u \cdot \nabla v
=-\int_\Omega f \cdot \nabla v + \int_\Omega F \cdot v
+ \langle g, v\rangle_{B^{-1/2, 2}(\partial\Omega) \times B^{1/2, 2} (\partial\Omega)}
\end{equation}
for any $v\in V$.
Moreover,  we have
\begin{equation}\label{energy-11}
\|\nabla u \|_{L^2(\Omega)}
\le C \left\{ \| f \|_{L^2(\Omega)} + \| F \|_{L^2(\Omega)} + \| g \|_{B^{-1/2, 2}(\partial\Omega)}\right\}.
\end{equation}

Observe that if $(f, F, g)$ are smooth functions in $\mathbb{R}^d$ and $u$ is given above,
there exists $\phi \in L^2(\Omega)$ such that
\begin{equation}\label{wf}
\int_\Omega \nabla u \cdot \nabla \varphi
-\int_\Omega \phi\,  \text{\rm div}(\varphi)
=-\int_\Omega f \cdot \nabla \varphi
+\int_\Omega F \cdot \varphi
+ \langle g, \varphi\rangle_{B^{-1/2, 2}(\partial\Omega) \times B^{1/2, 2} (\partial \Omega)}
\end{equation}
for any $\varphi \in C^\infty(\mathbb{R}^d; \mathbb{R}^d)$.
This follows from the potential theory for the Stokes equations in \cite{FKV-1988}.
By choosing $\varphi$ such that $\varphi  (x) =x-x_0$ in $\Omega$, where $x_0\in \Omega$,
it follows from   \eqref{wf} that
\begin{equation}\label{wf-1}
\Big |\int_\Omega \phi \Big|
\le C
 \left\{  \| \nabla u \|_{L^p(\Omega)} + \| f \|_{L^p(\Omega)} + \| F \|_{L^p(\Omega)} + \| g \|_{B^{-1/p, p}(\partial\Omega)}\right\}
\end{equation}
for any $1< p< \infty$.
Note that \eqref{wf} also gives
$$
-\Delta u +\nabla \phi =\text{\rm div}(f) +F \quad \text{ in } \Omega,
$$
in the sense of distributions. Thus,
\begin{equation}\label{wf-2}
\aligned
\| \phi -\fint_\Omega \phi \|_{L^p(\Omega)}
 & \le C \| \nabla \phi \|_{W^{-1, p} (\Omega)}\\
& \le C  \left\{  \|\nabla u \|_{L^p(\Omega)} + \| f \|_{L^p(\Omega)} + \| F \|_{L^p(\Omega)} \right\},
\endaligned
\end{equation}
which, together with \eqref{wf-1},  leads to
\begin{equation}\label{wf-3}
\| \phi \|_{L^p(\Omega)}
\le C  \left\{  \|\nabla u \|_{L^p(\Omega)} +  \| f \|_{L^p(\Omega)} + \| F \|_{L^p(\Omega)} + \| g \|_{B^{-1/p, p}(\partial\Omega)}\right\},
\end{equation}
for $1< p< \infty$.
In particular,  if $p=2$, we obtain the energy estimate,
\begin{equation}\label{wf-4}
\|\nabla u \|_{L^2(\Omega)} + \| \phi \|_{L^2(\Omega)}
\le C  \left\{  \| f \|_{L^2(\Omega)} + \| F \|_{L^2(\Omega)} + \| g \|_{B^{-1/2,2}(\partial\Omega)}\right\}.
\end{equation}
By a density argument we  deduce that  both \eqref{wf} and \eqref{wf-4}
hold for any $f \in L^2(\Omega; \mathbb{R}^{d\times d})$, $F\in L^2(\Omega; \mathbb{R}^d)$, and $g\in B^{-1/2, 2}(\partial\Omega; \mathbb{R}^d)$,
satisfying the condition \eqref{comp}.
We will call $(u, \phi)$ the weak solution of \eqref{inhomogeneousStokes}.

To establish the $W^{1, p}$ estimates in Theorem \ref{W1p}, we consider three cases.

\subsection{The case  $F=0$ and $ g=0$}

In this subsection we consider the Neumann problem,
   \begin{equation}\label{N-4-1}
\left\{
\aligned
        -\Delta u +\nabla \phi  & =\text{\rm div}(f)  & \quad &  \mbox{ in } \Omega, \\
        \text{\rm div}  (u) & = 0   & \quad &  \mbox{ in }\Omega, \\
        \frac{\partial u}{\partial \nu} & =-n\cdot f  & \quad &  \mbox{ on }
        \partial\Omega.
    \endaligned
    \right.
    \end{equation}
We begin with  a variant of  Theorem \ref{realVariableArgument}.

\begin{thm}\label{real2}
Let $\Omega$ be a bounded Lipschitz domain in $\mathbb{R}^d$
and $H \in L^2(\Omega)$. Let $p>2$ and $h\in L^q(\Omega)$ for some $2<q<p$.
 Suppose that for each ball $B$ centered on $\overline{\Omega}$
 with $|B|\leqslant \beta |\Omega|$, there exist two measurable functions $F_B$, $R_B$ on $\Omega\cap 2B$ such that
\begin{equation*}
  |H|\leq |H_B|+|R_B| ~~on ~~\Omega\cap 2B,
\end{equation*}
\begin{equation}\label{rvm2-1}
  \left\{\fint_{\Omega\cap 2B}|R_B|^p\right\}^{\frac{1}{p}}\leqslant C_1\left\{\left(\fint_{\Omega\cap 4B }|H|^2\right)^{\frac{1}{2}}
  +\sup_{B\subset B^{\prime}}\left(\fint_{B^{\prime}\cap \Omega}|h|^2\right)^{\frac{1}{2}}\right\},
\end{equation}
and
\begin{equation}\label{rvm2-2}
\fint_{\Omega\cap 2B}|H_B|^2dx
\leqslant C_2\sup_{B\subset
B^{\prime}}\fint_{\Omega\cap B^{\prime}}|h|^2,
\end{equation}
where $C_1, C_2>1$ and $0<\beta<1$.
 Then we have
\begin{equation}\label{real result}
\left\{\fint_{\Omega}|H|^q \right\}^{\frac{1}{q}}\leqslant
C\left\{\left(\fint_{\Omega}|H|^2\right)^{\frac{1}{2}}
+\left(\fint_{\Omega}|h|^q \right)^{\frac{1}{q}}\right\},
\end{equation}
where $C>0$ depends only on $C_1,C_2,d,p,q,\beta$ and the Lipschitz character of $\Omega$.
\end{thm}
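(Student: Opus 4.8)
The plan is to run the solid-domain version of Shen's real-variable machinery \cite{Shen-2007}: convert the two hypotheses into a good-$\lambda$ inequality for a restricted maximal function of $|H|^2$, and then integrate it. First I would introduce the restricted Hardy--Littlewood maximal operator on $\Omega$,
$$
\mathcal M(w)(x)=\sup\Big\{\fint_{\Omega\cap B}|w|:\ B\text{ a ball centered on }\overline\Omega,\ x\in B\Big\},\qquad x\in\Omega,
$$
and record the structural facts we need: since $\Omega$ is Lipschitz it satisfies the measure-density bound $|\Omega\cap B(x,r)|\gtrsim r^{d}$ for $x\in\overline\Omega$ and $0<r<r_0$, so $\overline\Omega$ with the Euclidean distance and Lebesgue measure is doubling; consequently $\mathcal M$ is of weak type $(1,1)$ and bounded on $L^{s}(\Omega)$ for every $s>1$, with constants depending only on $d$ and the Lipschitz character. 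Since, by Lebesgue differentiation, $\mathcal M(|H|^{2})^{1/2}\ge|H|$ a.e. on $\Omega$, it suffices to bound $\int_{\Omega}\mathcal M(|H|^{2})^{q/2}$.

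Next I would establish the good-$\lambda$ inequality. Put $\lambda_{1}=\dfrac{C_{0}}{\beta}\,\fint_{\Omega}|H|^{2}$ with $C_{0}$ chosen, via weak type $(1,1)$, so large that $|\{\mathcal M(|H|^{2})>\lambda_{1}\}|<\beta|\Omega|$, and for $\lambda>\lambda_{1}$ write $g(\lambda)=|\{\mathcal M(|H|^{2})>\lambda\}|$. The open set $\{\mathcal M(|H|^{2})>\lambda\}$ is a proper subset of $\Omega$; decompose it into its maximal dyadic subcubes $\{Q_{i}\}$, which are disjoint, have union equal to the set up to a null set, satisfy $|Q_{i}|\le\beta|\Omega|$, and — because the dyadic parent of $Q_i$ meets the complement and $\Omega$ has the measure-density property — satisfy $\fint_{\Omega\cap 4Q_{i}}|H|^{2}\le C\lambda$. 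For $x\in Q_{i}$, a ball $B\ni x$ with $B\not\subset 2Q_{i}$ has radius bounded below by a multiple of the side of $Q_{i}$, so that $B$ together with the parent of $Q_i$ fits inside a fixed dilate of $B$ that contains a point of $\{\mathcal M(|H|^{2})\le\lambda\}$; with the measure-density bound this forces $\fint_{\Omega\cap B}|H|^{2}\le C_{3}\lambda$. Hence, for $A>C_{3}$,
$$
\{x\in Q_{i}:\mathcal M(|H|^{2})(x)>A\lambda\}\subset\{x\in Q_{i}:\mathcal M\big(|H|^{2}\mathbf{1}_{\Omega\cap 2Q_{i}}\big)(x)>A\lambda\}.
$$
On $\Omega\cap 2Q_{i}$ I then use $|H|\le|H_{Q_{i}}|+|R_{Q_{i}}|$, apply weak type $(1,1)$ to $|H_{Q_{i}}|^{2}\mathbf{1}_{\Omega\cap 2Q_{i}}$ together with \eqref{rvm2-2}, and weak type $(p/2,p/2)$ to $|R_{Q_{i}}|^{2}\mathbf{1}_{\Omega\cap 2Q_{i}}$ together with \eqref{rvm2-1} and $\fint_{\Omega\cap 4Q_{i}}|H|^{2}\le C\lambda$. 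Splitting the cubes according to whether $\sup_{Q_{i}\subset B'}\fint_{\Omega\cap B'}|h|^{2}$ exceeds $\gamma\lambda$ (it is $\le\gamma\lambda$ precisely when $Q_i$ contains a point where $\mathcal M(|h|^2)\le\gamma\lambda$), this yields, for every $0<\gamma\le1$,
$$
\big|\{x\in Q_{i}:\mathcal M(|H|^{2})>A\lambda\}\big|\le\Big(\frac{C}{A^{p/2}}+\frac{C\gamma}{A}\Big)|Q_{i}|+\big|\{x\in Q_{i}:\mathcal M(|h|^{2})>\gamma\lambda\}\big|,
$$
and summing over the disjoint $Q_{i}$ gives $g(A\lambda)\le\varepsilon\,g(\lambda)+\mu(\gamma\lambda)$ for all $\lambda>\lambda_{1}$, where $\varepsilon=\tfrac{C}{A^{p/2}}+\tfrac{C\gamma}{A}$ and $\mu(\lambda)=|\{\mathcal M(|h|^{2})>\lambda\}|$.

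Finally I would integrate. With $I(\Lambda)=\int_{0}^{\Lambda}\lambda^{q/2-1}g(\lambda)\,d\lambda$, which is finite because $g\le|\Omega|$, I multiply the good-$\lambda$ inequality by $\lambda^{q/2-1}$, integrate over $(0,\Lambda)$ (bounding $g$ crudely on $(0,\lambda_{1})$), and change variables on the left to get
$$
A^{-q/2}I(\Lambda)\le\varepsilon\,I(\Lambda)+C\lambda_{1}^{q/2}|\Omega|+\int_{0}^{\infty}\lambda^{q/2-1}\mu(\gamma\lambda)\,d\lambda.
$$
Because $q<p$, I first fix $A$ large and then $\gamma$ small so that $\varepsilon<\tfrac12 A^{-q/2}$, which lets $\varepsilon I(\Lambda)$ be absorbed into the left side; and because $q/2>1$, the strong $(q/2,q/2)$ bound for $\mathcal M$ gives $\int_{0}^{\infty}\lambda^{q/2-1}\mu(\gamma\lambda)\,d\lambda=C\gamma^{-q/2}\int_{\Omega}\mathcal M(|h|^{2})^{q/2}\le C_{\gamma}\int_{\Omega}|h|^{q}$. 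Letting $\Lambda\to\infty$ then yields $\int_{\Omega}|H|^{q}\le\int_{\Omega}\mathcal M(|H|^{2})^{q/2}\le C\big(\lambda_{1}^{q/2}|\Omega|+\int_{\Omega}|h|^{q}\big)$, and since $\lambda_{1}\approx\fint_{\Omega}|H|^{2}$ this is exactly \eqref{real result}, with $C$ depending only on $C_1,C_2,d,p,q,\beta$ and the Lipschitz character. I expect the main obstacle to be the geometric bookkeeping on the Lipschitz domain: producing the Calder\'on--Zygmund family adapted to $\overline\Omega$ and, above all, dominating the non-local part of $\mathcal M(|H|^{2})$ on each $Q_{i}$ by the stopping value $C_{3}\lambda$ — this is precisely where the doubling and measure-density properties of Lipschitz domains are used; once that reduction to $2Q_i$ is in place, the remainder is routine manipulation of the two weak-type inequalities for $\mathcal M$.
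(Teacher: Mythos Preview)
The paper does not give a proof of this theorem at all: its ``proof'' is the single line ``See \cite[Theorem~4.2.6]{Shen-book}.'' Your proposal is a faithful sketch of exactly that argument --- the good-$\lambda$ inequality for the localized maximal function of $|H|^{2}$, obtained from a Calder\'on--Zygmund stopping-time decomposition, the weak-type $(1,1)$ bound applied to the $H_B$-part via \eqref{rvm2-2}, and the weak-type $(p/2,p/2)$ bound applied to the $R_B$-part via \eqref{rvm2-1} --- followed by integration in $\lambda$. So your approach is correct and coincides with the one the paper defers to.
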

\begin{proof}
  See \cite[Theorem 4.2.6]{Shen-book}.
\end{proof}

By using Theorem \ref{real2}, the $W^{1,p}$
estimates for Stokes systems subjected to the Neumann boundary condition   are reduced to
 a weak reverse H\"older inequality  for $p>2$.

\begin{lemma}\label{1.5}
	Let $\Omega$ be a bounded Lipschitz domain in $\mathbb{R}^d,d\geq 2$ and $p>2$.
	Suppose that
	the weak reverse H\"older inequality,
	\begin{equation}\label{1.6}
		\left(\fint_{D(x_0,r)}( |\nabla v| + |\pi|) ^p
		\right)^{\frac{1}{p}} \leq C_0
		\left(\fint_{D(x_0,2r)}( |\nabla
		v| + |\pi|)^2\right)^{\frac{1}{2}},
	\end{equation}
	holds, whenever $(v, \pi)$ is a  weak solution to
\begin{equation}\label{1.6-1}
\left\{
\aligned
			-\Delta v + \nabla \pi & =0 & \quad & \text{ in } D(x_0, 2r),\\
		\text{\rm div} (v)  & =0 &  & \text{ in } D(x_0, 2r), \\
		\frac{\partial v}{\partial \nu} & =0 & \quad & \text{ on } I (x_0, 2r),
		\endaligned
		\right.
		\end{equation}
	where  $x_0\in \partial \Omega $ and $0< r< r_0$.
		Let $(u,\phi)\in W^{1, 2} (\Omega; \mathbb{R}^d)\times L^2(\Omega)$ be a weak solution of \eqref{N-4-1}  with
		$f\in L^p(\Omega; \mathbb{R}^{d\times d})$.
	Then $u\in W^{1,p}(\Omega; \mathbb{R}^d )$, $\phi \in L^p(\Omega)$, and
	\begin{equation}\label{1.7}
		\|\nabla u\|_{L^p(\Omega)} + \| \phi \|_{L^p(\Omega)}\le C\| f\|_{L^p(\Omega)},
			\end{equation}
	where $C>0$ depends only on $d, p, C_0$ and $\Omega$.
\end{lemma}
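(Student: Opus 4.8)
The plan is to derive Lemma~\ref{1.5} from the real--variable argument of Theorem~\ref{real2}, applied with
\[
H=|\nabla u|+|\phi|,\qquad h=|f|.
\]
Since a weak reverse H\"older inequality of the type \eqref{1.6} is self-improving, we may assume it holds with some $\bar p>p$ in place of $p$, and invoke Theorem~\ref{real2} with $\bar p$ in the role of ``$p$'' and with $q=p$; note that $h=|f|\in L^p(\Omega)$ by hypothesis, while the $L^2$ energy estimate \eqref{wf-4} (with $F=0$, $g=0$) gives $\|H\|_{L^2(\Omega)}\le C\|f\|_{L^2(\Omega)}$, so $H\in L^2(\Omega)$. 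It then remains to verify, for every ball $B=B(y,r)$ centered on $\overline{\Omega}$ with $|B|\le\beta|\Omega|$, a decomposition $|H|\le|H_B|+|R_B|$ on $\Omega\cap 2B$ in which $R_B$ is bounded in $L^{\bar p}$ on $\Omega\cap 2B$ by the $L^2$-average of $H$ over a fixed dilate of $B$ plus the maximal $L^2$-average of $h$, and $H_B$ is bounded in $L^2$ on $\Omega\cap 2B$ by the maximal $L^2$-average of $h$.

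For a ball $B$ well inside $\Omega$ (so that a fixed dilate $c\,B$ lies in $\Omega$), let $(w,\theta)$ be the weak solution of the Stokes--Dirichlet problem $-\Delta w+\nabla\theta=\text{\rm div}(f)$, $\text{\rm div}(w)=0$ in $c\,B$ with $w\in W^{1,2}_0(c\,B;\R^d)$, and put $(v,\pi)=(u-w,\phi-\theta)$, a solution of the homogeneous Stokes system in $c\,B$; the interior reverse H\"older inequality for $(v,\pi)$ (interior estimates together with a Sobolev embedding, as in Corollary~\ref{new-cor}) and the energy estimate $\|\nabla w\|_{L^2(c\,B)}+\|\theta\|_{L^2(c\,B)}\le C\|f\|_{L^2(c\,B)}$ yield the two required bounds, with $R_B$ built from $|\nabla v|+|\pi|$ (after the usual pressure normalization) and $H_B=|\nabla w|+|\theta|$. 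For a ball $B$ close to $\partial\Omega$, choose $\tilde x_0\in\partial\Omega$ with $|y-\tilde x_0|=\text{\rm dist}(y,\partial\Omega)$, and let $(w,\theta)\in W^{1,2}(D(\tilde x_0,Kr);\R^d)\times L^2(D(\tilde x_0,Kr))$ be the weak solution of the localized Neumann problem \eqref{N-4-1} in the Lipschitz subdomain $D(\tilde x_0,Kr)$, i.e., $-\Delta w+\nabla\theta=\text{\rm div}(f)$, $\text{\rm div}(w)=0$ in $D(\tilde x_0,Kr)$ and $\frac{\partial w}{\partial\nu}=-n\cdot f$ on $\partial D(\tilde x_0,Kr)$, where $K$ is a fixed constant chosen so that $\Omega\cap 2B\subset D(\tilde x_0,Kr)$. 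By the $L^2$ theory used in \eqref{wf}--\eqref{wf-4} this solution exists with $\|\nabla w\|_{L^2(D(\tilde x_0,Kr))}+\|\theta\|_{L^2(D(\tilde x_0,Kr))}\le C\|f\|_{L^2(D(\tilde x_0,Kr))}$, the constant $C$ being uniform in $\tilde x_0$ and $0<r<r_0$ because, after rescaling, these subdomains have uniformly bounded Lipschitz characters. Then $(v,\pi)=(u-w,\phi-\theta)$ is a finite--energy weak solution of $-\Delta v+\nabla\pi=0$, $\text{\rm div}(v)=0$ in $D(\tilde x_0,Kr)$ with $\frac{\partial v}{\partial\nu}=0$ on $I(\tilde x_0,Kr)$ --- the conormal derivatives of $u$ and $w$ agree on the portion of $\partial D(\tilde x_0,Kr)$ lying on $\partial\Omega$, since $u$ solves \eqref{N-4-1} there. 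Hence the hypothesized inequality \eqref{1.6} applies to $(v,\pi)$; upgrading it from its radius--doubling form to a concentric form $\big(\fint_{D(\tilde x_0,\rho)}(|\nabla v|+|\pi|)^{\bar p}\big)^{1/\bar p}\le C\big(\fint_{D(\tilde x_0,Kr)}(|\nabla v|+|\pi|)^2\big)^{1/2}$ with $\Omega\cap 2B\subset D(\tilde x_0,\rho)$, by a routine finite covering that uses \eqref{1.6} near the boundary and interior estimates away from it, and combining with $|R_B|\le|H|+|H_B|$, the energy estimate for $(w,\theta)$, and the non-degeneracy $|\Omega\cap 2B|\ge c\,|B|$, one obtains \eqref{rvm2-1} and \eqref{rvm2-2} (after, if needed, enlarging the dilation factors $2,4$ in Theorem~\ref{real2} to larger fixed constants, which is immaterial to its proof).

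With the hypotheses of Theorem~\ref{real2} in place, its conclusion \eqref{real result} gives
\[
\Big(\fint_\Omega(|\nabla u|+|\phi|)^p\Big)^{1/p}\le C\Big\{\Big(\fint_\Omega(|\nabla u|+|\phi|)^2\Big)^{1/2}+\Big(\fint_\Omega|f|^p\Big)^{1/p}\Big\},
\]
so in particular $\nabla u\in L^p(\Omega;\R^{d\times d})$ and $\phi\in L^p(\Omega)$. Using the energy bound $\|\,|\nabla u|+|\phi|\,\|_{L^2(\Omega)}\le C\|f\|_{L^2(\Omega)}\le C|\Omega|^{\frac12-\frac1p}\|f\|_{L^p(\Omega)}$ (H\"older's inequality, as $p>2$) to absorb the $L^2$ term, we arrive at \eqref{1.7}.

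The main obstacle is the boundary case: one must (i) solve the localized Neumann problem \eqref{N-4-1} in the Lipschitz subdomains $D(\tilde x_0,Kr)$ with energy estimates uniform in $\tilde x_0$ and $r$, and (ii) check that the difference $(v,\pi)=(u-w,\phi-\theta)$ is precisely the type of object to which the \emph{hypothesized} inequality \eqref{1.6} applies --- a finite--energy solution of the homogeneous Stokes system with vanishing conormal derivative on the flat part of the boundary --- and then pass from its radius--doubling form to the concentric form above, which is where the various comparable radii and measure ratios have to be tracked. The interior case and the final bookkeeping with the real--variable theorem are standard.
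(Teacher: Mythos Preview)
Your argument is correct, but the paper takes a cleaner route for the decomposition step. Instead of solving an auxiliary Neumann problem on the \emph{local} subdomain $D(\tilde x_0,Kr)$ (which forces you to track the uniform Lipschitz character of these subdomains and to re-derive energy estimates there), the paper localizes the \emph{data} by a cutoff $\varphi\in C_0^\infty(B(x_0,8r))$ with $\varphi\equiv 1$ on $B(x_0,4r)$ and solves two \emph{global} Neumann problems on $\Omega$: $(v,\pi)$ with data $\text{div}(\varphi f)$ and conormal $-\varphi f\cdot n$, and $(w,\theta)=(u-v,\phi-\pi)$, which then automatically satisfies the homogeneous Stokes system with $\partial w/\partial\nu=0$ on $I(x_0,4r)$. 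The global energy estimate \eqref{wf-4} immediately gives the $H_B$ bound, and the hypothesis \eqref{1.6} applies directly to $(w,\theta)$ without any concentric-form upgrading or covering argument. Your local approach works and is a legitimate alternative, but it trades one clean global estimate for several uniformity verifications (items (i) and (ii) in your last paragraph); the paper's cutoff trick sidesteps all of that. Both proofs finish identically via Theorem~\ref{real2}, the self-improvement of \eqref{1.6}, and the energy estimate to absorb the $L^2$ term.
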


\begin{proof}

	Let $(u,\phi)$ be the unique weak  solution to \eqref{N-4-1}
in $W^{1,2}(\Omega; \mathbb{R}^d )\times L^2(\Omega)$ with $f\in L^p(\Omega; \mathbb{R}^{d\times d})$.
 To prove \eqref{1.7}, we apply Theorem \ref{real2} with
$$
H= |\nabla u| + |\phi| \quad \text{ and } \quad  h= |f|.
$$
We verify the conditions \eqref{rvm2-1}-\eqref{rvm2-2} for the case $x_0\in \partial\Omega$.
The interior case, where $B(x_0, 2r)\subset \Omega$,  is similar.

Let $\varphi \in C_{0}^{\infty}(B(x_0,8r))$ be a cut-off function such that
$$
\varphi= \begin{cases}1, & \text { on } B(x_0,4r) ,\\[0.1cm]
 0, & \text { outside } B(x_0,8r).\end{cases}
$$
We decompose $u=v+w$ and $\phi=\pi+\theta$, where $(v,\pi)$ and $(w, \theta)$ satisfy
\begin{equation}\label{Equation v}
  \left\{
  \aligned
-\Delta v+\nabla \pi & = \text{\rm div} (\varphi f)   & \quad  & \text { in } \Omega,\\
\text{\rm div}  (v) & =0 & \quad & \text { in } \Omega, \\
\frac{\partial v}{\partial \nu} & =-\varphi f \cdot n & \quad & \text { on } \partial \Omega,
\endaligned
\right.
\end{equation}
and
\begin{equation}\label{Equation w}
  \left\{
  \aligned
-\Delta w+\nabla \theta & = \text{\rm div} ((1-\varphi ) f) & \quad  & \text { in } \Omega,\\
\text{\rm div}  (w) & =0 & \quad & \text { in } \Omega, \\
\frac{\partial w}{\partial \nu} & = (\varphi-1)  f \cdot n & \quad & \text { on } \partial \Omega.
\endaligned
\right.
\end{equation}
Let $ H_B=|\nabla v| +|\pi | $, $R_B=|\nabla w| +|\theta|$. By using the energy estimate \eqref{wf-4} for $(v, \pi )$,  we obtain that
$$
\begin{aligned}
\fint_{2B\cap\Omega}\left|H_{B}\right|^2 &
\le\frac{C}{|2B\cap\Omega|}\int_{\Omega}( |\nabla v| + |\pi|) ^{2}  \\&\leq  \frac{C}{|2B\cap\Omega|}\int_{\Omega}  |f\varphi| ^{2}  \\
&\le C\fint_{8B\cap\Omega}|f|^{2} .
\end{aligned}
$$
This gives \eqref{rvm2-2}. To see \eqref{rvm2-1}, note that by \eqref{Equation w}, $w$ satisfies
\begin{equation}\label{Equation w2}
-\Delta w+\nabla \theta=0 \quad \text{and} \quad \text{\rm div}(w)=0 \quad \text {in } 4B\cap\Omega
\end{equation}
 and $\frac{\partial w}{\partial \nu} =0$ on $I(x_0, 4r)$.
 Thus by the weak reverse H\"older condition \eqref{1.6}, we have
$$
\left\{ \fint_{D(x_0, 2r) }(|\nabla w|+|\theta|)  ^{p}  \right\}^{\frac{1}{p}} \le C\left\{ \fint_{D(x_0, 4r) }( |\nabla w| +|\theta|) ^{2}  \right\}^{\frac{1}{2}}.
$$
It follows that
$$
\begin{aligned}
\left\{ \fint_{D(x_0, 2r) }\left|R_{B}\right|^{p}  \right\}^{\frac{1}{p}}&
 \leq C\left\{ \fint_{D(x_0, 4r) }(|\nabla w| + |\theta|) ^2  \right\}^{\frac{1}{2}}\\
&\leqslant C\left\{ \fint_{D(x_0, 4r)}(|\nabla u| +|\phi|) ^{2}  \right\}^{\frac{1}{2}}
+C\left\{ \fint_{D(x_0, 4r)}( |\nabla v| +|\pi|)^{2}  \right\}^{\frac{1}{2}} \\
&\leqslant C\left\{ \fint_{D(x_0, 4r) }|H|^{2}  \right\}^{\frac{1}{2}} +C\left\{ \fint_{D(x_0, 8r) }|h|^{2}  \right\}^{\frac{1}{2}},
\end{aligned}
$$
which gives \eqref{rvm2-1}. Thus, by Theorem \ref{real2} and energy estimates,  we have
\begin{equation}\label{4.6}
\aligned
\left\{ \fint_{\Omega}  (|\nabla u|  + |\phi|) ^{q}  \right\}^{\frac{1}{q}}&\leq C\left\{ \fint_{\Omega} ( |\nabla u| +|\phi|  ) ^2  \right\}^{\frac{1}{2}}+C\left\{ \fint_{\Omega}\left|f\right|^{q}  \right\}^{\frac{1}{q}}\\
&\leq C\left\{ \fint_{\Omega}\left|f\right|^{q}  \right\}^{\frac{1}{q}}
\endaligned
\end{equation}
for $2< q< p$. Since the reverse H\"older condition \eqref{1.6} is self-improving,
the estimate \eqref{4.6}  holds for $2< q< \bar{p}$, where $\bar{p}>p$.
In particular, it holds for $q=p$.
As a result, we obtain \eqref{1.7}.
\end{proof}

Armed with Theorem \ref{1.5}, we  give the proof of Theorem \ref{W1p} for the case $p>2$ and $F,g=0$.

\begin{lemma}\label{p>2}
Let $\Omega$ be a bounded convex domain in $\mathbb{R}^d$.
 Let $(u,\phi)\in W^{1,2}(\Omega)\times L^2(\Omega)$ be a weak solution of
 \eqref{N-4-1} with $f\in L^p(\Omega; \mathbb{R}^{d\times d})$.
  Then there exists $\e>0$, depending on $\Omega$,
  such that for $d\ge 3$ and $2<p<\frac{2d}{d-2}+\e$,
\begin{equation}\label{w1p-1}
\|\nabla u\|_{L^p(\Omega)}+ \left\|\phi\right\|_{L^p(\Omega)} \leq C\|f\|_{L^p(\Omega)},
\end{equation}
where $C$ depends on $d, p$ and $\Omega$.
If $d=2$, the estimate \eqref{w1p-1} holds for any $2< p< \infty$.
\end{lemma}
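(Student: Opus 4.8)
The plan is to obtain Lemma \ref{p>2} as a direct consequence of Lemma \ref{1.5}, once the weak reverse H\"older hypothesis \eqref{1.6} has been checked. So the first step is to show that, for $p$ in the ranges asserted in Lemma \ref{p>2}, the inequality \eqref{1.6} holds for every weak solution $(v,\pi)$ of the local homogeneous Neumann problem \eqref{1.6-1} on the convex domain $\Omega$. Granting this, Lemma \ref{1.5} applied to the weak solution $(u,\phi)$ of \eqref{N-4-1} immediately gives $u\in W^{1,p}(\Omega;\mathbb{R}^d)$, $\phi\in L^p(\Omega)$ together with the bound \eqref{1.7}, which is exactly \eqref{w1p-1}.

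To establish \eqref{1.6} I would invoke Corollary \ref{new-cor}. If $(v,\pi)$ solves \eqref{1.6-1} with $\partial v/\partial\nu=0$ on $I(x_0,2r)$, then (modulo the regularity reduction discussed below) $(v,\pi)$ satisfies the hypotheses of Theorem \ref{W22} on $D(x_0,2r)$, so Corollary \ref{new-cor} yields
\[
\Big(\fint_{D(x_0,r)}(|\nabla v|+|\pi|)^p\Big)^{1/p}\le C\Big(\fint_{D(x_0,2r)}(|\nabla v|+|\pi|)^2\Big)^{1/2}
\]
for $2<p<\frac{2d}{d-2}+\e$ when $d\ge 3$, and for every $p>2$ when $d=2$, with $C$ depending only on the Lipschitz character of $\Omega$. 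This is precisely \eqref{1.6}, and — crucially — the admissible range of $p$ in Corollary \ref{new-cor} is exactly the range claimed for \eqref{w1p-1}, so no exponent is lost in the reduction.

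I expect the only genuine difficulty to be bridging the regularity gap: Theorem \ref{W22} and Corollary \ref{new-cor} are stated for smooth solutions on a convex domain with $C^2$ boundary, whereas Lemma \ref{1.5} must be fed $W^{1,2}$-weak solutions on an arbitrary bounded convex domain. I would handle this exactly as in the Remark following Theorem \ref{NeumannProblem}. First, assume $\Omega$ is convex with $C^2$ boundary. A weak solution $(v,\pi)$ of \eqref{1.6-1} is smooth inside $D(x_0,2r)$ by interior regularity for the Stokes system, and, since $\partial\Omega$ is $C^2$ near $I(x_0,2r)$ and the Neumann data vanishes there, it is also smooth up to $I(x_0,2r)$ away from the corner set $\partial B(x_0,2r)\cap\partial\Omega$; as the cutoff $\xi$ in the proof of Theorem \ref{W22} is supported in $\overline{B(x_0,\tfrac{3}{2}r)}$, this much regularity already justifies applying the integral identity of Lemma \ref{convex} with $v=w_\beta$, hence \eqref{1.6} holds for weak solutions, with constant controlled by the Lipschitz character. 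Then Lemma \ref{1.5} gives \eqref{w1p-1} on any smooth convex $\Omega$ with a constant depending only on $d,p$ and its Lipschitz character.

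Finally, for a general bounded convex $\Omega$ I would approximate it by a sequence of smooth convex domains $\{\Omega_\ell\}$ with uniformly bounded Lipschitz characters, apply the previous step on each $\Omega_\ell$ to get \eqref{w1p-1} with an $\ell$-independent constant, and pass to the limit using the $L^2$ energy estimate for the difference of solutions, as indicated in the Remark after Theorem \ref{NeumannProblem}; a routine density argument in $f$ then completes the proof.
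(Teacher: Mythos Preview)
Your proposal is correct and follows exactly the paper's approach: reduce to the weak reverse H\"older inequality \eqref{1.6} via Lemma \ref{1.5}, and then observe that \eqref{1.6} is precisely the content of Corollary \ref{new-cor}. Your additional discussion of the regularity gap and approximation by smooth convex domains is just what the paper defers to the Remark following Theorem \ref{NeumannProblem}, so there is no substantive difference.
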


\begin{proof}

In view of Lemma \ref{1.5}, it suffices  to show that the condition \eqref{1.6} holds for $2<p<\frac{2d}{d-2}+\e$ if $d\ge 3$,  and for
$2<p<\infty$ if $d=2$.
However, this was done in Corollary \ref{new-cor}.
 \end{proof}



\begin{thm}\label{p<2}
Let $\Omega$ be a bounded convex domain in $\mathbb{R}^d$.  Then there exists $\varepsilon>0$,
    depending only on $\Omega$, such that
    for any given $f\in C^\infty(\mathbb{R}^d; \mathbb{R}^{d\times d} )$ with $d\ge 3$ and
    \begin{equation}\label{range-1}
    \Big| \frac{1}{p}-\frac12\Big| < \frac{1}{d} +\e,
    \end{equation}
      the weak solution $(u,\phi)$ of
      the Neumann problem \eqref{N-4-1},
  satisfies
    \begin{equation} \label {w1p-2}
    \|\nabla u\|_{L^p(\Omega)}+\|\phi\|_{L^p(\Omega)} \leq C\|f\|_{L^p(\Omega)},
    \end{equation}
    where $C$ depends only on $ p$ and $\Omega$.
    If $d=2$, the estimate \eqref{w1p-2}  holds for $1<p< \infty$.
\end{thm}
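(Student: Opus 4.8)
The plan is to deduce Theorem \ref{p<2} from Lemma \ref{p>2} by a duality argument. For $p$ in the range \eqref{range-1} with $p\ge 2$, the estimate \eqref{w1p-2} is already contained in Lemma \ref{p>2} (the case $p=2$ being the energy estimate \eqref{wf-4} with $F=0$, $g=0$), so it suffices to treat $1<p<2$. First I would note that, after shrinking $\varepsilon$ if necessary, for such $p$ the conjugate exponent $p'=p/(p-1)$ satisfies $2<p'<\frac{2d}{d-2}+\varepsilon'$ for some $\varepsilon'>0$ when $d\ge 3$ — this is just the rewriting of $\frac1p-\frac12<\frac1d+\varepsilon$ as a bound on $\frac1{p'}$ — and $2<p'<\infty$ when $d=2$. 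In either case $p'$ lies in the range of exponents treated in Lemma \ref{p>2}.

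To estimate $\|\nabla u\|_{L^p(\Omega)}$ I would fix $G\in C_0^\infty(\Omega;\mathbb{R}^{d\times d})$ and let $(v,\pi)\in W^{1,2}(\Omega;\mathbb{R}^d)\times L^2(\Omega)$ be the weak solution of the Neumann problem \eqref{N-4-1} with $G$ in place of $f$; since $G$ is supported in $\Omega$, the boundary datum $-n\cdot G$ vanishes. By Lemma \ref{p>2} applied with the exponent $p'$,
\begin{equation}\label{dual-est}
\|\nabla v\|_{L^{p'}(\Omega)}+\|\pi\|_{L^{p'}(\Omega)}\le C\|G\|_{L^{p'}(\Omega)} .
\end{equation}
Because $p'\ge 2$ and $\Omega$ is bounded, $v\in W^{1,p'}(\Omega)\hookrightarrow W^{1,2}(\Omega)$; since $u\in W^{1,2}(\Omega)$, $\phi\in L^2(\Omega)$ and $f$ is smooth, both sides of the weak formulation \eqref{weak-NP} (with $F=0$, $g=0$) for $(u,\phi)$ are continuous in the $W^{1,2}(\Omega)$-topology, and hence, by density of smooth functions in $W^{1,2}(\Omega)$, that identity holds with test function $\varphi=v$. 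Using $\text{\rm div}(v)=0$, this gives $\int_\Omega\nabla u\cdot\nabla v=-\int_\Omega f\cdot\nabla v$. Symmetrically, the weak formulation for $(v,\pi)$ holds with $\varphi=u\in W^{1,2}(\Omega)$, and $\text{\rm div}(u)=0$ yields $\int_\Omega\nabla v\cdot\nabla u=-\int_\Omega G\cdot\nabla u$. Combining these two identities and invoking \eqref{dual-est},
\begin{equation*}
\Big|\int_\Omega G\cdot\nabla u\Big|=\Big|\int_\Omega f\cdot\nabla v\Big|\le\|f\|_{L^p(\Omega)}\,\|\nabla v\|_{L^{p'}(\Omega)}\le C\|f\|_{L^p(\Omega)}\,\|G\|_{L^{p'}(\Omega)} .
\end{equation*}

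Since $\nabla u\in L^2(\Omega)\subset L^p(\Omega)$, $1<p<\infty$, and $C_0^\infty(\Omega;\mathbb{R}^{d\times d})$ is dense in $L^{p'}(\Omega;\mathbb{R}^{d\times d})$, taking the supremum over all such $G$ with $\|G\|_{L^{p'}(\Omega)}\le 1$ yields $\|\nabla u\|_{L^p(\Omega)}\le C\|f\|_{L^p(\Omega)}$. The pressure bound is then immediate from the general a priori estimate \eqref{wf-3} with $F=0$, $g=0$, namely $\|\phi\|_{L^p(\Omega)}\le C\{\|\nabla u\|_{L^p(\Omega)}+\|f\|_{L^p(\Omega)}\}\le C\|f\|_{L^p(\Omega)}$. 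This proves \eqref{w1p-2} for $1<p<2$, which together with the case $p\ge 2$ completes the proof; the two-dimensional case needs no change, since there every $p'\in(2,\infty)$ is admissible in Lemma \ref{p>2}.

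The point worth stressing is that all of the analytic substance — the $W^{2,2}$ bound of Theorem \ref{W22}, the reverse H\"older inequality of Corollary \ref{new-cor}, and the real-variable mechanism of Theorem \ref{real2} — is already contained in Lemma \ref{p>2}, so the argument above is essentially soft. The only genuinely delicate points are the exponent arithmetic placing $p'$ in the admissible range (which forces $\varepsilon$ to be shrunk) and the routine density argument that legitimizes using the $W^{1,2}$ functions $v$ and $u$ as test functions in each other's weak formulations; the latter is painless precisely because $p'\ge 2$. I do not anticipate any real obstacle beyond this bookkeeping.
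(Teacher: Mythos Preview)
Your proposal is correct and follows essentially the same duality argument as the paper: both treat $p\ge 2$ by Lemma \ref{p>2}, and for $p<2$ test against the solution $(v,\pi)$ of the auxiliary Neumann problem with data $G\in C_0^\infty(\Omega;\mathbb{R}^{d\times d})$ (the paper writes $h$ for your $G$), use Lemma \ref{p>2} at the conjugate exponent $p'$ to bound $\|\nabla v\|_{L^{p'}}$, combine the two weak formulations to obtain $\int_\Omega G\cdot\nabla u=\int_\Omega f\cdot\nabla v$, and then recover $\|\phi\|_{L^p}$ from \eqref{wf-3}. Your extra remarks on the exponent arithmetic and the density step are sound and make explicit what the paper leaves implicit.
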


\begin{proof}
The case $p>2$  is given by Lemma \ref{p>2}.
For $p<2$, we use a duality argument.
Let $(u, \phi)$ be a weak solution of \eqref{N-4-1}, where $f\in C^\infty (\mathbb{R}^d; \mathbb{R}^{d\times d})$.
Let $h\in C_0^\infty(\Omega; \mathbb{R}^{d\times d} )$ and $(v, \pi)$ be a weak solution of
  \begin{equation*}
  \left\{
  \aligned
			-\Delta v +\nabla \pi & ={\rm div} (h) & \quad & \text{ in }\Omega, \\
          \text{\rm div} (v) & = 0   & \quad &\text{ in }  \Omega, \\
			\frac{\partial v}{\partial \nu} & = 0 & \quad & \text{ on } \partial\Omega.
			\endaligned
			\right.
				\end{equation*}
Let $q=\frac{p}{p-1}$. Note that by Lemma \ref{p>2},   $\|\nabla v \|_{L^q(\Omega)} \le C \| h \|_{L^q(\Omega)}$.
In view of the weak formulation \eqref{weak-11} of variational solutions for  $(u,\phi)$ and $(v,\pi)$, one has
\begin{equation}\label{4.7}
\int_\Omega h\cdot \nabla u=-\int_\Omega \nabla u \cdot \nabla v=\int_\Omega f\cdot \nabla v.
\end{equation}
It follows  that
\begin{equation*}
\|\nabla u\|_{L^p(\Omega)}=\sup_{\|h\|_{L^q(\Omega)}\leq1}\left|\int_\Omega h\cdot \nabla u\,dx\right|\leq C\|f\|_{L^p(\Omega)}.
\end{equation*}
 By \eqref{wf-3}, we also obtain
$$
\| \phi \|_{L^p(\Omega)} \le C\left\{ \| \nabla u \|_{L^p(\Omega)} +  \| f\|_{L^p(\Omega)} \right\}
\le C \| f \|_{L^p(\Omega)}.
$$
\end{proof}

\subsection{The case of $f=0$ and $F=0$}

\begin{lemma}\label{dual_g}
	Let $\Omega$ be a bounded convex domain in $\rn{d}$. Let $(u,\phi)$ be a weak solution of
	\begin{equation*}
	\left\{
		\aligned
			-\Delta u +\nabla \phi & =0 & \quad & \text{in}\quad \Omega, \\
           \text{\rm div} (u) &= 0 &  \quad &\text{in}\quad \Omega, \\
           \frac{\partial u}{\partial \nu} & =g & \quad & \text{on}\quad \partial\Omega,
		\endaligned
		\right.
	\end{equation*}
	where $g\in B^{-1/p,p}(\partial \Omega; \mathbb{R}^d )\cap B^{-1/2, 2} (\partial\Omega; \mathbb{R}^d)$ , $\langle g, e_j\rangle
	_{B^{-1/2, 2} (\partial \Omega) \times B^{1/2, 2} (\partial\Omega)}=0$,
	and $p$ satisfies \eqref{range-1} for $d\geq 3$ and $1<p<\infty$ for $d=2$.
Then
	\begin{equation}\label{4.9}
		\|\nabla u\|_{L^p(\Omega)} + \| \phi \|_{L^p(\Omega)}  \leq C\|g\|_{B^{-1/p,p}(\partial \Omega)},
	\end{equation}
where $C$ depends only on $ p$ and $\Omega$.
\end{lemma}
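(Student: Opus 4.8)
The plan is to prove Lemma \ref{dual_g} by a duality argument, reducing the $W^{1,p}$ estimate for the Neumann problem with boundary data $g$ to the already-established estimate for the Neumann problem with right-hand side $\text{\rm div}(h)$, i.e.\ Theorem \ref{p<2}. We fix $p$ in the stated range and set $q=p/(p-1)$, so that $q$ also lies in the range \eqref{range-1} (for $d\ge 3$) or $(1,\infty)$ (for $d=2$), since the condition $|\frac1p-\frac12|<\frac1d+\e$ is symmetric under $p\mapsto q$. For an arbitrary test field $h\in C_0^\infty(\Omega;\mathbb{R}^{d\times d})$ we let $(v,\pi)$ be the weak solution of the Neumann problem with right-hand side $\text{\rm div}(h)$ and zero boundary data; by Theorem \ref{p<2} we have $\|\nabla v\|_{L^q(\Omega)}+\|\pi\|_{L^q(\Omega)}\le C\|h\|_{L^q(\Omega)}$.

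The key step is the pairing identity. Using the weak formulation of the Neumann problem for $(u,\phi)$ with test function $v$ (which is admissible because $\text{\rm div}(v)=0$, so the pressure $\phi$ drops out, and $\int_\Omega v=0$ can be arranged), one gets
\begin{equation*}
\int_\Omega \nabla u\cdot\nabla v = \langle g, v\rangle_{B^{-1/p,p}(\partial\Omega)\times B^{1/p,p'}(\partial\Omega)}.
\end{equation*}
On the other hand, the weak formulation of the problem for $(v,\pi)$ with test function $u$ gives
\begin{equation*}
\int_\Omega \nabla v\cdot\nabla u = -\int_\Omega h\cdot\nabla u,
\end{equation*}
again with the pressure term vanishing because $\text{\rm div}(u)=0$. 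Combining the two identities yields
\begin{equation*}
\Big|\int_\Omega h\cdot\nabla u\Big| = \big|\langle g, v\rangle\big| \le \|g\|_{B^{-1/p,p}(\partial\Omega)}\,\|v\|_{B^{1/p,p'}(\partial\Omega)} \le C\,\|g\|_{B^{-1/p,p}(\partial\Omega)}\,\|\nabla v\|_{L^q(\Omega)},
\end{equation*}
where the last inequality uses the trace/Besov embedding $\|v\|_{B^{1/p,p'}(\partial\Omega)}\le C\|\nabla v\|_{L^q(\Omega)}$ (together with $\int_\Omega v=0$ to control the lower-order term) on the Lipschitz domain $\Omega$. Applying the $L^q$ bound for $\nabla v$ from Theorem \ref{p<2}, the right-hand side is $\le C\|g\|_{B^{-1/p,p}(\partial\Omega)}\|h\|_{L^q(\Omega)}$. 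Taking the supremum over all $h\in C_0^\infty(\Omega;\mathbb{R}^{d\times d})$ with $\|h\|_{L^q(\Omega)}\le 1$ gives $\|\nabla u\|_{L^p(\Omega)}\le C\|g\|_{B^{-1/p,p}(\partial\Omega)}$.

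Finally, the pressure estimate follows from \eqref{wf-3} applied with $f=0$, $F=0$: $\|\phi\|_{L^p(\Omega)}\le C\{\|\nabla u\|_{L^p(\Omega)}+\|g\|_{B^{-1/p,p}(\partial\Omega)}\}\le C\|g\|_{B^{-1/p,p}(\partial\Omega)}$. As usual, the a priori estimates are first derived for smooth data $g$ (and, strictly, on smooth convex approximating domains $\Omega_\ell$ as in the Remark following Theorem \ref{NeumannProblem}) so that all pairings make classical sense, and the general case is obtained by density in $B^{-1/p,p}(\partial\Omega;\mathbb{R}^d)$.

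The main obstacle I anticipate is purely bookkeeping: justifying that $v$ (resp.\ $u$) is an admissible test field in the other equation's weak formulation — this requires knowing $v\in W^{1,q}(\Omega)$ has a well-defined trace in $B^{1/p,p'}(\partial\Omega)$ and that both weak formulations \eqref{weak-NP} extend from $\varphi\in C^\infty(\mathbb{R}^d;\mathbb{R}^d)$ to such Sobolev test fields with $\text{\rm div}=0$, and that the pressure terms genuinely vanish. On a convex (hence Lipschitz) domain these are standard density/trace facts, but they must be invoked carefully; the approximation argument of the Remark handles the smoothness of $\Omega$ and of $g$.
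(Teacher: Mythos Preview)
Your proof is correct and follows essentially the same duality argument as the paper: solve an auxiliary Neumann problem $-\Delta v+\nabla\pi=\text{\rm div}(h)$ with homogeneous conormal data, invoke Theorem~\ref{p<2} for the $L^q$ bound on $\nabla v$, combine the two weak formulations to get $-\int_\Omega h\cdot\nabla u=\langle g,v-\fint_\Omega v\rangle$, estimate via the trace embedding $B^{1/p,p'}(\partial\Omega)\hookleftarrow W^{1,q}(\Omega)$ and Poincar\'e, and finish the pressure bound with \eqref{wf-3}. The only cosmetic difference is that the paper subtracts the mean $\alpha=\fint_\Omega v$ inside the pairing (using $\langle g,e_j\rangle=0$), whereas you normalize $v$ to have mean zero from the outset; these are equivalent.
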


\begin{proof}
  Let $f\in C_0^\infty(\Omega; \mathbb{R}^{d\times d} )$ and $(v, \pi)$ be a weak solution of
  \begin{equation*}
  \left\{
		\aligned
			-\Delta v +\nabla \pi & ={\rm div} (f) &  \quad  & \text{in}\quad \Omega, \\
           \text{\rm div}(v) &  =  0 & \quad & \text{in}\quad \Omega, \\
			\frac{\partial v}{\partial \nu} & =0 & \quad &  \text{on}\quad \partial\Omega.
		\endaligned
		\right.
	\end{equation*}
By Lemma \ref{p<2}, we know that $\|\nabla v\|_{L^p(\Omega)}\leq C\|f\|_{L^p(\Omega)}$. The weak formulations of variational solutions
for $(u, \phi)$ and $(v, \pi)$
imply that
$$
-\int_\Omega f\cdot \nabla u\,dx\ =\int_\Omega\nabla u \cdot \nabla v=\langle  g, v-\alpha  \rangle,
$$
where $\alpha =\fint_\Omega v$.
Note that
$$
\begin{aligned}
\left| \langle  g,  (v-\alpha )\rangle\right|&\leq \|g\|_{B^{-\frac{1}{p},p}(\partial\Omega)}\|v-\fint_\Omega v \|_{B^{\frac{1}{p},q}(\partial\Omega)}\\
&\leq C\|g\|_{B^{-\frac{1}{p},p}(\partial\Omega)}\|v-\fint_\Omega v \|_{W^{1,q}(\Omega)}\\
&\leq C\|g\|_{B^{-\frac{1}{p},p}(\partial\Omega)}\|\nabla v\|_{L^{q}(\Omega)}\\
&\leq C\|g\|_{B^{-\frac{1}{p},p}(\partial\Omega)}\|f\|_{L^{q}(\Omega)},
\end{aligned}
$$
where $1/p+1/q=1$. By  duality we obtain
$\|\nabla u \|_{L^p(\Omega)} \le C \| g \|_{B^{-1/p, p}(\partial\Omega)}$.
The desired estimate for $\phi$ follows from \eqref{wf-3}.
\end{proof}

\subsection{The case of $f=0$ and $g=0$}

\begin{lemma}\label{dual_F}
	Let $\Omega$ be a bounded convex domain in $\rn{d}$.
	 Let $(u,\phi)$ be a weak solution of
	\begin{equation*}
	\left\{
		\aligned
			-\Delta u +\nabla \phi & =F & \quad & \text{ in }  \ \Omega, \\
           \text{\rm div} (u) & = 0  & \quad & \text{ in } \Omega, \\
			\frac{\partial u}{\partial \nu} & =\frac{-1}{|\partial \Omega|}\int_\Omega F  &  \quad & \text{ on }\quad \partial\Omega,
		\endaligned
		\right.
	\end{equation*}
	where  $F\in C^\infty(\mathbb{R}^d; \mathbb{R}^d)$.
Then
	\begin{equation}\label{4.10}
		\|\nabla u\|_{L^p(\Omega)}  + \| \phi \|_{L^p(\Omega)} \leq C\|F\|_{L^{p}(\Omega)},
	\end{equation}
where $p$ satisfies \eqref{range-1} for $d\ge 3$ and $1< p< \infty$ for $d=2$.
\end{lemma}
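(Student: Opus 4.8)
The plan is to prove Lemma \ref{dual_F} by a duality argument, reducing it to the case $F=0$, $g=0$ already settled in Lemma \ref{p<2}, in the same spirit as Lemma \ref{dual_g}. Since $\Omega$ is bounded, after multiplying $F$ by a smooth cut-off equal to $1$ on a neighborhood of $\overline\Omega$ we may assume $F\in C_0^\infty(\mathbb{R}^d;\mathbb{R}^d)$; this also makes all the auxiliary objects below smooth when $\Omega$ is smooth, which is what we ultimately want for the approximation argument. We may assume $p\ne2$, since for $p=2$ the estimate is \eqref{wf-4} combined with $\|g\|_{B^{-1/2,2}(\partial\Omega)}\le C|g|\le C\|F\|_{L^2(\Omega)}$. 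Set $q=\frac{p}{p-1}$; as $|\tfrac1q-\tfrac12|=|\tfrac1p-\tfrac12|$, the exponent $q$ lies in the same admissible range as $p$, so Lemma \ref{p<2} is available with exponent $q$.

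To bound $\|\nabla u\|_{L^p(\Omega)}$, fix $h\in C_0^\infty(\Omega;\mathbb{R}^{d\times d})$ and let $(v,\pi)\in W^{1,2}(\Omega;\mathbb{R}^d)\times L^2(\Omega)$ be the weak solution of the Neumann problem \eqref{N-4-1} with $h$ in place of $f$, normalized so that $\int_{\partial\Omega}v\,d\sigma=0$. By Lemma \ref{p<2} applied with exponent $q$, $\|\nabla v\|_{L^q(\Omega)}\le C\|h\|_{L^q(\Omega)}$. Testing the weak formulation \eqref{weak-NP} of $(u,\phi)$ with $\varphi=v$ and using $\text{\rm div}(v)=0$ together with $\langle g,v\rangle=\bigl(-\tfrac{1}{|\partial\Omega|}\int_\Omega F\bigr)\cdot\int_{\partial\Omega}v\,d\sigma=0$, one obtains
\begin{equation*}
\int_\Omega\nabla u\cdot\nabla v=\int_\Omega F\cdot v .
\end{equation*}
Testing the weak formulation of $(v,\pi)$ with $\varphi=u$ and using $\text{\rm div}(u)=0$ gives $\int_\Omega\nabla v\cdot\nabla u=-\int_\Omega h\cdot\nabla u$. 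Combining the two identities,
\begin{equation*}
\int_\Omega h\cdot\nabla u=-\int_\Omega F\cdot v .
\end{equation*}

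Since $\int_{\partial\Omega}v\,d\sigma=0$, the Poincar\'e inequality on the Lipschitz domain $\Omega$ (a consequence of the trace theorem and the Poincar\'e--Wirtinger inequality) gives $\|v\|_{L^q(\Omega)}\le C\|\nabla v\|_{L^q(\Omega)}$. Hence, by H\"older's inequality and Lemma \ref{p<2},
\begin{equation*}
\Bigl|\int_\Omega h\cdot\nabla u\Bigr|\le\|F\|_{L^p(\Omega)}\|v\|_{L^q(\Omega)}\le C\|F\|_{L^p(\Omega)}\|\nabla v\|_{L^q(\Omega)}\le C\|F\|_{L^p(\Omega)}\|h\|_{L^q(\Omega)} .
\end{equation*}
Because $\nabla u\in L^2(\Omega)\subset L^1(\Omega)$ and $C_0^\infty(\Omega;\mathbb{R}^{d\times d})$ is dense in $L^q(\Omega;\mathbb{R}^{d\times d})$, taking the supremum over $\|h\|_{L^q(\Omega)}\le1$ yields $\|\nabla u\|_{L^p(\Omega)}\le C\|F\|_{L^p(\Omega)}$. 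The bound for $\phi$ then follows from \eqref{wf-3} with $f=0$: the Neumann datum is the constant vector $g=-\tfrac1{|\partial\Omega|}\int_\Omega F$, so $\|g\|_{B^{-1/p,p}(\partial\Omega)}\le C|g|\le C\|F\|_{L^1(\Omega)}\le C\|F\|_{L^p(\Omega)}$, whence $\|\phi\|_{L^p(\Omega)}\le C\{\|\nabla u\|_{L^p(\Omega)}+\|F\|_{L^p(\Omega)}\}\le C\|F\|_{L^p(\Omega)}$.

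The individual estimates are all either quoted (Lemma \ref{p<2}, \eqref{wf-3}) or classical (Poincar\'e, trace, $L^q$ density), so the only real care is needed in justifying the duality pairing — testing each weak formulation against the other solution's velocity field and ensuring the constants depend only on $d$, $p$, and the Lipschitz character. As explained in the Remark after Theorem \ref{NeumannProblem}, the cleanest way to handle this is to first prove the estimate on smooth convex domains with smooth $F$, where $u,\phi,v,\pi$ are smooth up to the boundary and the two identities above are genuine integrations by parts, and then pass to the limit along a sequence of smooth convex domains approximating $\Omega$ with uniformly bounded Lipschitz characters.
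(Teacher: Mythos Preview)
Your proof is correct and follows essentially the same duality argument as the paper: test against the velocity field $v$ of an auxiliary Neumann problem with data $\text{div}(h)$, use Lemma~\ref{p<2} (Theorem~\ref{p<2}) to bound $\|\nabla v\|_{L^q}$, and apply a Poincar\'e inequality. The only cosmetic difference is that you normalize $v$ by $\int_{\partial\Omega}v=0$ up front so that the boundary term $\langle g,v\rangle$ vanishes, whereas the paper keeps the boundary term and absorbs it by writing $\int_\Omega F\cdot v-\int_\Omega F\cdot\fint_{\partial\Omega}v$ and applying the Poincar\'e inequality to $v-\fint_{\partial\Omega}v$; the two are equivalent.
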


\begin{proof}
  Let $(v, \pi)$ be a weak solution of
  \begin{equation*}
		\left\{
		\aligned
					-\Delta v +\nabla \pi & ={\rm div}  (f) & \quad  & \text{ in } \Omega, \\
          \text{\rm div}(v) & = 0  & \quad &  \text{ in }  \Omega, \\
		\frac{\partial v}{\partial \nu} & =0  & \quad &  \text{ on } \partial\Omega,
		\endaligned
		\right.
	\end{equation*}
where $f\in C_0^\infty(\Omega; \mathbb{R}^{d\times d} )$.
By Theorem \ref{p<2}, we know that $\|\nabla v\|_{L^q(\Omega)}\leq C\|f\|_{L^q(\Omega)}$,
where $q=\frac{p}{p-1}$.
  The weak formulations of variational solutions  $(u, \phi)$ and $(v, \pi)$ imply that
$$
\begin{aligned}
\int_\Omega f\cdot \nabla u  \,dx&=\int_{\partial\Omega}\left(\frac{1}{|\partial \Omega|}\int_\Omega F dx\right)\cdot v\,d\sigma
-\int_\Omega F \cdot v\\
& =\int_\Omega F \big(\fint_{\partial\Omega} v \big)dx-\int_\Omega  F \cdot v.
\end{aligned}
$$
It follows that
$$
\begin{aligned}
\Big| \int_\Omega f\cdot \nabla u  \,dx \Big|
&\leq \|F\|_{L^p(\Omega)}\|v-\fint_{\partial\Omega} v \|_{L^{q}(\Omega)}\\
&\leq C\|F\|_{L^p(\Omega)}\|\nabla v\|_{L^{q}(\Omega)}\\
&\leq C\|F\|_{L^p(\Omega)}\|f\|_{L^{q}(\Omega)}
\end{aligned}
$$
By duality we obtain $ \|\nabla u \|_{L^p(\Omega)} \le C \|  F \|_{L^p(\Omega)}$.
 As before, the bound for $\| \phi \|_{L^p(\Omega)}$ follows from \eqref{wf-3}.
\end{proof}

\subsection{ The proof of Theorem \ref{W1p}}

To prove the existence, we first assume that $F \in C^\infty(\mathbb{R}^d; \mathbb{R}^d)$,
$ f\in C^\infty (\mathbb{R}^d; \mathbb{R}^{d\times d})$ and $ g\in B^{-1/p, p}(\partial\Omega; \mathbb{R}^d) \cap B^{-1/2, 2}(\partial\Omega; \mathbb{R}^d)$
and  that $(F, f, g)$ satisfies the compatibility condition \eqref{comp}.
Let $(v,\pi), (w, \theta)$ and $(h, \eta)$ be weak solutions of
 \begin{equation*}
 \left\{
 \aligned
			- \Delta v +\nabla \pi &={\rm div} ( f ) &\quad &  \text{ in }  \Omega, \\
         \text{\rm div}(v)  & = 0    & \quad & \text{ in } \Omega, \\
			\frac{\partial v}{\partial \nu} & = -n\cdot f  & \quad & \text{ on }\partial\Omega,
 \endaligned
 \right.
	~~\text{and}~~~~
\left\{
\aligned
			-\Delta w +\nabla \theta & =F  & \quad &  \text{ in } \Omega, \\
           \text{\rm div} (w) &  = 0  & \quad &  \text{ in }  \Omega, \\
		\frac{\partial w}{\partial \nu} & =\frac{-1}{|\partial \Omega|}\int_\Omega F dx  & \quad  & \text{ on }  \partial\Omega,
		\endaligned
		\right.
	 \end{equation*}
and
\begin{equation*}
		\left\{
		\aligned
			-\Delta h + \nabla \eta & =0  & \quad &  \text{ in }  \Omega, \\
           \text{\rm div}(h) & = 0  & \quad &  \text{ in }  \Omega, \\
			\frac{\partial h}{\partial \nu} & =g+\frac{1}{|\partial \Omega|}\int_\Omega F dx & \quad &  \text{ on } \partial\Omega.
		\endaligned
		\right.
	\end{equation*}
	Then  $(u, \phi)=(v+ w + h, \pi + \theta + \eta)$ is a weak solution of \eqref{inhomogeneousStokes}.
By  Lemmas \ref{p>2}-\ref{dual_F} we obtain
$$
\begin{aligned}
& \|\nabla u\|_{L^p(\Omega)} + \|\phi\|_{L^p(\Omega)}\\
&\leq \|\nabla h\|_{L^p(\Omega)}+\|\nabla v\|_{L^{p}(\Omega)}+\|\nabla w\|_{L^{p}(\Omega)}
+ \| \pi \|_{L^p(\Omega)}
+ \| \theta\|_{L^p(\Omega)}
+ \|\eta\|_{L^p(\Omega)}\\
&\leq C\left\{\|g\|_{B^{-\frac{1}{p},p}(\partial\Omega)}+\|F\|_{L^p(\Omega)}+\|f\|_{L^{p}(\Omega)}\right\}.
\end{aligned}
$$
By a density argument we deduce that for any $f\in L^p(\Omega; \mathbb{R}^{d\times d} )$,
$F \in L^p(\Omega; \mathbb{R}^d)$ and $g\in B^{-1/p, p}(\partial\Omega; \mathbb{R}^d)$ that satisfy \eqref{comp},
there exist $u\in W^{1, p}(\Omega; \mathbb{R}^d)$ and $\phi \in L^p(\Omega)$ such that div$(u)=0$ in $\Omega$ and
\eqref{weak-NP} holds for any $\varphi \in C^\infty(\mathbb{R}^d; \mathbb{R}^d)$.
Moreover, the solution $(u, \phi)$ satisfies \eqref{1.3}.

To prove the uniqueness, suppose $u\in W^{1, p}(\Omega; \mathbb{R}^d)$ and $\phi\in L^p(\Omega)$ such that
div$(u)=0$ in $\Omega$ and
\begin{equation}\label{uni-1}
\int_\Omega \nabla u \cdot \nabla \varphi -\int_\Omega \phi \, \text{\rm div}(\varphi)=0
\end{equation}
for any $\varphi \in C^\infty(\mathbb{R}^d; \mathbb{R}^d)$.
By subtracting a constant, we may assume
\begin{equation}\label{comp-10}
\langle u, e_j \rangle_{B^{-1/p, p}(\partial\Omega) \times B^{1/p, p^\prime}(\partial\Omega)} =0
\end{equation}
 for $1\le j \le d$.
By a density argument, it follows that \eqref{uni-1} continues to hold for any $\varphi \in W^{1. q}(\Omega; \mathbb{R}^d)$, where $q=\frac{p}{p-1}$.
Since $|u |^{p-2}  u \in L^q(\Omega; \mathbb{R}^d)$,
by the existence established above,  there exists   $(v, \pi)\in W^{1, q}(\Omega)\times L^q(\Omega)$ such that
\begin{equation*}
 \left\{
 \aligned
			- \Delta v +\nabla \pi &=|u |^{p-2}  u  &\quad &  \text{ in }  \Omega, \\
         \text{\rm div}(v)  & = 0    & \quad & \text{ in } \Omega, \\
			\frac{\partial v}{\partial \nu} & =\alpha  & \quad & \text{ on }\partial\Omega,
 \endaligned
 \right.
\end{equation*}
where $\alpha =  \frac{-1}{|\partial\Omega|} \int_\Omega |u|^{p-2} u \, dx$.
Thus,
$$
\int_\Omega \nabla v \cdot \nabla u =\int_\Omega |u|^p ,
$$
where we have used the assumption \eqref{comp-10}.
However, by letting $\varphi=v$ in \eqref{uni-1}, we obtain $\int_\Omega \nabla u \cdot \nabla v=0$.
As a result, $\int_\Omega | u|^p=0$. We obtain $u=0$ in $\Omega$.
By using  \eqref{uni-1}, we also  deduce that  $\phi=0$ in $\Omega$.

\newpage
\bibliography{Stokes}

\small
\noindent\textsc{Jun Geng, School of Mathematics and Statistics,
Lanzhou University,
Lanzhou 730000, Gansu, P.R. China.}\\
\emph{E-mail address}: \texttt{gengjun@lzu.edu.cn} \\

\small
\noindent\textsc{Zhongwei Shen, Department of Mathematics, University of Kentucky, Lexington, Kentucky 40506,
USA.}\\
\emph{E-mail address}: \texttt{zshen2@uky.edu} \\

\end{document}